\newtheorem{theorem}{Theorem}[section]
\newtheorem{lemma}[theorem]{Lemma}
\newtheorem{proposition}[theorem]{Proposition}
\newtheorem{corollary}[theorem]{Corollary}
\theoremstyle{definition}
\newtheorem{definition}[theorem]{Definition}
\newtheorem{example}[theorem]{Example}
\newtheorem{question}[theorem]{Question}
\theoremstyle{remark}
\newtheorem{remark}[theorem]{Remark}
\newtheorem{fact}[theorem]{Fact}
\def\l{{\langle}}
\def\r{{\rangle}}
\def\G{{\text{G}(\vec{\mathcal{A}})}}
\def\A{{\vec{\mathcal{A}}}}
\def\sky{{\text{sky}}}
\title[asympptotics of Guessing Sequences] {On the Asymptotic Behavior of Guessing Sequences}
\author{Tom Benhamou}
\address[Tom Benhamou]{Department of Mathematics, Rutgers University, New Brunswick, NJ USA}
\email{tom.benhamou@rutgers.edu}
\thanks{This research  was supported by the National Science Foundation under Grant
No. DMS-2346680}
\author{Sean LeClair}
\address[Sean LeClair]{Department of Mathematics, Rutgers University, New Brunswick, NJ USA}
\email{sean.leclair@rutgers.edu}
\thanks{This paper was the result of an undergraduate research project at Rutgers University New Brunswick}
\subjclass[2020]{03E35, 60G50, 06A07, 54D80}
\keywords{ultrafilter,  Tukey order, diamond, random walks, Fubini}
\date{January 2025}
\begin{document}

\begin{abstract}
    We continue the study of probabilistic and topological properties of the set of reals that are being guessed by a diamond sequence from \cite{TomFanxin}. 
    We show that the existence of sequence of a asymptotic growth $\pi$ which infinitely guesses a probability one set is equivalent to the divergence of $\sum_{n=0}^{\infty}\frac{\pi(n)}{2^n}$. We then provide concrete examples for guessing sequences of certain low asymptotic growth using random walks. 
    Finally, we show that the ultrafilter construction from \cite{TomFanxin} always yield an ultrafilters and a sequence which guesses a meager set, while a simple construction using Cohen forcing gives a non-meager set of guessed reals. These results answer \cite[Question 6.13]{TomFanxin} and partially addresses \cite[Question 6.8]{TomFanxin}.
\end{abstract}

\maketitle

\section{Introduction}

Combinatorial properties of ultrafilters have a wide range of applications, from graph colorings to compact Hausdorff spaces to voting systems. 
A particular aspect of interest, which gained renewed interest in recent years, is the study of the cofinal structure of an ultrafilter, or more specifically, the structure of the Tukey order restricted to ultrafilters.

In \cite{Tukey40}, Tukey introduced the Tukey ordering on directed sets in order to analyze Moore-Smith convergence in topology.
We define a \textit{partial order} $\leq_A$ on a set $A$ as a relation that is reflexive, (weakly) antisymmetric, and transitive. 
A partial order $( A,\leq_A)$ is \textit{directed} if for any distinct $a_0,a_1\in A$, there exists $a_3\in A$ such that $a_1\leq_A a_3$ and $a_2\leq_A a_3$. 
Our primary interest is in the reverse inclusion order $\supseteq$ on an ultrafilter.
We say a subset $X\subseteq A$ is \textit{cofinal} if for every $a\in A$, there exists $x\in X$ such that $a\leq x$. 
A function $f:A\to B$ is \textit{cofinal} if the image of every cofinal subset in $A$ is cofinal in $B$. 
We say that $(B,\leq_B)$ is \textit{Tukey reducible} to $(A,\leq_A)$, denoted $B\leq_T A$, if there exists a cofinal map $f:A\to B$. 
If $A\leq_T B$ and $B\leq_T A$, we say that $A$ is Tukey equivalent to $B$ and denote it $A \equiv_T B$. 
We call $\equiv_T$-equivalence classes \textit{Tukey types}. 

We consider the Tukey types of directed partial orders of size continuum, a class of types which admits a maximal Tukey type: $( [\mathfrak{c}]^{<\omega},\subseteq)$. 
Moreover, Tukey \cite{Tukey40} provided a combinatorial characterization for being Tukey top for objects of size continuum. 

\begin{theorem}
    $(A,\leq_A)\equiv_T ([\mathfrak{c}]^{<\omega},\subseteq)$ if and only if there exists a subset $X\subseteq A$ of size continuum such that every $B\in [A]^\omega$ is unbounded in $(A,\leq_A)$. 
\end{theorem}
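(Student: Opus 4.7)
The argument uses the standard duality between cofinal maps and unbounded (Tukey) maps: $B \leq_T A$ is equivalent to the existence of $g : B \to A$ sending unbounded subsets of $B$ to unbounded subsets of $A$. Since $|A| = \mathfrak{c}$, the inequality $A \leq_T ([\mathfrak{c}]^{<\omega},\subseteq)$ is automatic, by enumerating $A = \{a_\alpha : \alpha < \mathfrak{c}\}$ and mapping $F \in [\mathfrak{c}]^{<\omega}$ to an upper bound of $\{a_\alpha : \alpha \in F\}$ produced by directedness; for any cofinal $Y \subseteq [\mathfrak{c}]^{<\omega}$ and any $a_\alpha \in A$ there is some $F \in Y$ with $\alpha \in F$, whence its image dominates $a_\alpha$. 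So the content of the theorem is that $([\mathfrak{c}]^{<\omega},\subseteq) \leq_T A$ is equivalent to the existence of $X \subseteq A$ of cardinality $\mathfrak{c}$ each of whose countable subsets is unbounded in $A$ (reading the displayed $[A]^\omega$ as a typo for $[X]^\omega$, which is the only way the statement can be nontrivial).

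For the direction $(\Leftarrow)$, fix $X = \{x_\alpha : \alpha < \mathfrak{c}\}$ with the stated property and define $g : [\mathfrak{c}]^{<\omega} \to A$ by letting $g(F)$ be an upper bound of $\{x_\alpha : \alpha \in F\}$. If $\mathcal{F} \subseteq [\mathfrak{c}]^{<\omega}$ is unbounded then $\bigcup \mathcal{F}$ is infinite (since a finite union of finite sets is a bound). Choosing distinct $\alpha_n \in \bigcup \mathcal{F}$ with witnesses $F_n \in \mathcal{F}$ satisfying $\alpha_n \in F_n$ gives $g(F_n) \geq x_{\alpha_n}$, so any upper bound of $g[\mathcal{F}]$ would dominate $\{x_{\alpha_n} : n < \omega\}$, contradicting the hypothesis on $X$. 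Thus $g$ is an unbounded map, witnessing $([\mathfrak{c}]^{<\omega},\subseteq) \leq_T A$.

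For $(\Rightarrow)$, let $g : [\mathfrak{c}]^{<\omega} \to A$ be an unbounded map, obtained from the assumed Tukey equivalence, and set $X = \{g(\{\alpha\}) : \alpha < \mathfrak{c}\}$. The one step that requires care is verifying $|X| = \mathfrak{c}$: otherwise some fiber $g^{-1}(a)$ contains infinitely many singletons $\{\alpha_n\}$, and these form an unbounded subset of $[\mathfrak{c}]^{<\omega}$ (their union is infinite) whose $g$-image $\{a\}$ is bounded, contradicting the choice of $g$. Any countable $B \subseteq X$ can then be written as $\{g(\{\alpha_n\}) : n < \omega\}$ with distinct $\alpha_n$, so the set of singletons $\{\{\alpha_n\} : n < \omega\}$ is unbounded in $[\mathfrak{c}]^{<\omega}$, forcing $B$ to be unbounded in $A$. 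The only conceptually delicate moment is this pigeonhole argument securing $|X| = \mathfrak{c}$ in the forward direction; everything else is bookkeeping around the equivalence of cofinal and unbounded maps.
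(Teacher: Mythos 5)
Your proposal is correct. Note that the paper itself offers no proof of this statement --- it is quoted as Tukey's classical characterization from \cite{Tukey40} --- so there is no in-paper argument to compare against; your write-up is the standard one: the reduction $A\leq_T[\mathfrak{c}]^{<\omega}$ is automatic from $|A|=\mathfrak{c}$ and directedness, the substance is the equivalence of $[\mathfrak{c}]^{<\omega}\leq_T A$ with the existence of the unbounded-in-countable-pieces set $X$, and your pigeonhole step securing $|X|=\mathfrak{c}$ together with the cofinal/unbounded-map duality (a standard fact you may cite, though it is not stated in the paper) is exactly how this is usually done. Your reading of the displayed $[A]^\omega$ as $[X]^\omega$ is indeed the correct repair of the statement, since the literal version fails for, e.g., $[\mathfrak{c}]^{<\omega}\times(\omega+1)$.
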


As we already mentioned, in this paper we are primarily interested in the Tukey order of ultrafilters on $\omega$, which are indeed objects of size continuum - a line of research that has extensively studied in e.g. \cite{SatInCan,TomNatasha2,TomNatasha,TomGabe,Blass/Dobrinen/Raghavan15,Dobrinen/Todorcevic11,Raghavan/Todorcevic12}. 
This study was initiated by Isbell \cite{Isbell65}, who showed that there exists an ultrafilter that is Tukey equivalent to $([\mathfrak{c}]^{<\omega},\subseteq)$. 
Ultrafilters that realize the maximal Tukey type are called \textit{Tukey-top}. 
A natural question that arises from Isbell's work is whether there are non-Tukey-top ultrafilters, or equivalently, are all ultrafilters Tukey-equivalent? 
This became known as Isbell's problem. 

As of recently, the full independence of Isbell's problem has been established. In \cite{Milovich08}, Milovich showed that p-points are below the maximum Tukey type. 
Moreover, Cancino and Zapletal showed its consistent that every nonprincipal ultrafilter is Tukey-top in \cite{cancinomanríquez2025isbellproblem}.
For more on Tukey types of ultrafilters see \cite{DobrinenTukeySurvey15}. 

In \cite{TomFanxin}, the first author and Wu provided an alternative construction to Tukey top ultrafilters using a guessing principle denoted by $\diamondsuit^-(\mathcal{U})$, where $\mathcal{U}$ is an ultrafilter on $\omega$  (see Definition \ref{def:1}). 
This definition is based on earlier work of the first author and Goldberg \cite{TomGabe} on the Tukey order at the level of measurable cardinals. 
More specifically, the principle ensures the existence of a sequence $\mathcal{A}_n\subseteq P(n)$ such that:
    \begin{enumerate}
        \item There are $\mathfrak{c}$-many reals $x$ such that $B_x:=\{n<\omega\mid x\cap n\in\mathcal{A}_n\}\in\mathcal{U}$.
        \item $n\mapsto|\mathcal{A}_n|$ has slow asymptotic growth.
    \end{enumerate}
Condition $(2)$ will be made precise in \ref{def:1}.    
The authors then prove that if an ultrafilter $\mathcal{U}$ on $\omega$ satisfies $\diamondsuit^-$, then $\mathcal{U}$ is Tukey-top. 
This is what motivates the further study of diamond sequences. 

Furthermore, using the Borel-Cantelli lemma, they show that the set of reals which are guessed by a sequence as in $\diamondsuit^-(\mathcal{U})$ cannot include all reals, and in particular the principle from \cite{TomGabe} cannot hold for ultrafilters on $\omega$. Concretely, they show that if $\A=\langle \mathcal{A}_n\mid n<\omega\r$ is a sequence of sets such that $\sum \frac{|\mathcal{A}_n|}{2^{f(n)}}$ converges, then  the set
$$\text{G}(\A):=\{x\in P(\omega)\mid B_x\text{ is infinite}\}$$
called the "set of guessed reals", must be probability zero. 
In particular, for any ultrafilter $\mathcal{U}$ and sequence of $\mathcal{A}_n$'s satisfying condition $(1)-(2)$,
$$\text{G}_{\mathcal{U}}(\A):=\{x\in P(\omega)\mid B_x\in\mathcal{U}\}$$
must have probability $0$. Then the authors asked whether the set $\text{G}_{\mathcal{U}}(\A)$ must always be small in other senses as well. Since measure and category are the most significant notions of smallness for sets of reals (i.e. the Lebesgue null ideal and the ideal of meager sets) the following question is of interest:
\begin{question}[{\cite[6.8]{TomFanxin}}]\label{question1}
    What sort of ”size” restriction on the set of reals which are guessed by an ultrafilter is consistent? Alternatively, what sort of topological properties are consistent to hold on a set of reals which are guessed by an ultrafilter?
\end{question}
Our first result is a positive answer to that question, i.e. $\text{G}_{\mathcal{U}}(\A)$ is meager, for the type of ultrafilters $\mathcal{U}$ constructed in \cite{TomFanxin}. Then we provide a simple example of a model (i.e. adding one Cohen real) where there is an ultrafilter $\mathcal{U}$ such that $\text{G}_{\mathcal{U}}(\A)$ is non-meager.

Another natural question from \cite{TomFanxin} is whether the asymptotic lower bound that $\sum\frac{\pi(n)}{2^{f(n)}}=\infty$ is tight for there to be a guessing sequence $\A$ such that $\text{G}(\A)$ has positive probability. 
\begin{question}
    How asymptotic-small can a function $\pi:\mathbb{N}\to\mathbb{N}$ be so that there is a seqeunce $\langle \mathcal{A}_n\mid n\in\mathbb{N}\rangle$ such that $|\mathcal{A}_n|\leq\pi(n)$, and the set $\text{G}(\A)$ has positive (standard borel) probability?  
\end{question}

In this paper we answer this question and show that $\sum_{n=0}^{\infty}\frac{\pi(n)}{2^n}$ is a threshold, namely if $\sum_{n=0}^{\infty}\frac{\pi(n)}{2^n}=\infty$, then there is a seqeunce (and in fact many sequences) $\A=\l \mathcal{A}_n\mid n<\omega\r$ such that $\text{G}(\A)$ has probability $1$. 
While this theorem is an existence theorem, the problem becomes how to find specific sequences:
\begin{question}
    Given a function $\pi:\mathbb{N}\to\mathbb{N}$ such that $\sum_{n=0}^{\infty}\frac{\pi(n)}{2^n}=\infty$. Construct natural examples of $\mathcal{A}_n$'s such that $\text{G}(\A)$ has probability one.
\end{question}
In section \ref{Sec: concrete}, we first construct some simple examples. 
Then we show that random walks give rise to natural examples for the functions $\pi(n)=\binom{n}{\frac{n}{2}}$ and $\binom{\frac{n}{2}}{\frac{n}{4}}^2$.
To provide these examples, we will use properties of random walks on lattices and P\'olya's theorem (see theorem \ref{polya}). 

\section{Guessing Principles of Ultrafilters on $\omega$.}

Due to the dual nature of this paper, we will provide both set theoretic and probabilistic detailed preliminaries. 
Let us start with introducing the notion of a filter, which are intended to provide an abstract notion of largeness.
A filter $\mathcal{F}$ on a set $A$ is a subset of the powerset of $X$, $P(X)$, such that: \begin{enumerate}
    \item $A\in \mathcal{F}$, $\emptyset\notin \mathcal{F}$
    \item If $A\in \mathcal{F}$ and $A\subseteq B$, then $B\in \mathcal{F}$
    \item If $A,B\in\mathcal{F}$, then $A\cap B\in\mathcal{F}$
\end{enumerate}
Additionally, we say that a filter $\mathcal{F}$ is an ultrafilter if 
\begin{enumerate}
    \item[(4)] For every $A\subseteq X$, either $A\in \mathcal{F}$ or $X\setminus A\in\mathcal{F}$.
\end{enumerate}
Every filter induces a dual notion of \textit{smallness} called the \textit{dual ideal}. 
In more formal terms, the dual ideal to a filter $\mathcal{F}$ on the set $X$ is the set $\mathcal{F}^*=\{A\subseteq X\:|\: X\setminus A\in \mathcal{F}\}$. 
Notice that if $\mathcal{F}$ is a filter that is not maximal, then the filter and its dual ideal do not cover all of the powerset of $X$. 
For a filter $\mathcal{F}$ and dual ideal $\mathcal{F}^*$, the set of positive sets is $\mathcal{F}^+=\{A\subseteq X\:|\:A\notin \mathcal{F}^*\}$. 
A filter is \textit{nonprincipal} if it does not contain any singletons.
If $\mathcal{U}$ is an ultrafilter, then it is nonprincipal if and only if it contains the Fr\'echet filter. 

Let the Fr\'echet filter $\mathcal{FR}$ on $X$ be a filter whose dual ideal is $[\omega]^{<\omega}$-- the collection of finite subsets of $,\omega$. 
More generally, let $\lambda $ be cardinal and $A$ be any set, denote by:
\begin{itemize}
    \item $[A]^\lambda=\{X\subseteq A\:|\: |X|=\lambda\}$.
    \item $[A]^{\leq\lambda}=\{X\subseteq A\:|\: |X|\leq\lambda\}$.
    \item $[A]^{<\lambda}=\{X\subseteq A\:|\: |X|<\lambda\}$.
\end{itemize}

Let $\gamma$ be a limit ordinal, we call a set $A\subseteq \gamma$ \textit{unbounded} in $\gamma$ if $\sup(A)=\gamma$.
A set $A\subseteq \gamma$ is \textit{closed} in $\gamma$ if for all limit $\alpha<\gamma$, if $\sup(A\cap\alpha)=\alpha$, then $\alpha\in A$. $A$ is a \textit{club} in $\alpha$ if it is closed and unbounded in $\alpha$.  For an ordinal $\kappa$ of uncountable cofinality we 
define $$\text{Cub}_\kappa=\{ X\subseteq \kappa \:|\: \exists C\text{ a club in }\kappa, C\subseteq X\}.$$ 
The set $\text{Cub}_\kappa$ is a filter over $\kappa$ which is called the \textit{club filter on $\kappa$}.
A set $S\subseteq \kappa$ is \textit{stationary} if it is $\text{Cub}_\kappa$-positive   i.e. for every club $C\subseteq \kappa$, $S\cap C\neq \emptyset$.

Recall the definition of Jensen's $\diamondsuit$ principle from \cite{Jensen1972}:  there exists a sequence $\l A_\alpha\:|\: \alpha<\omega_1\r$ such that $A_\alpha\subseteq \alpha$ and for any subset $X\subseteq \omega_1$, the set $\{\alpha<\omega_1\:|\: X\cap\alpha =A_\alpha\}$ is stationary. 
Unfortunately, it is impossible to generalize $\diamondsuit$ to countable sequences. 
Howbeit, a weakening of $\diamondsuit$, still due to Jensen, can be generalized in a useful way. Let us first recall that for a cardinal $\kappa$ of uncountable cofinal:
\begin{itemize}
    \item $\diamondsuit^-(\kappa)$ holds if there is a sequence $\l \mathcal{A}_\alpha\:|\: \alpha<\kappa\r$ such that $\mathcal{A}_\alpha\subseteq P(\alpha)$ and $|\mathcal{A}_\alpha|\leq |\alpha|$. For every $X\subseteq \kappa$, the set $\{\alpha \in \kappa \:|\:X\cap \alpha\in\mathcal{A}_\alpha\}$ is stationary in $\kappa$. 
    \item $\diamondsuit^*(\kappa)$ holds if for every $X\subseteq \kappa$, the set $\{\alpha\in\kappa\:|\: X\cap \alpha\in\mathcal{A}_\alpha\}$ is moreover a club in $\kappa$.
\end{itemize}
These principles, as they are, are obviously meaningless on $\omega$ as the club filter is not defined there. Nonetheless, on $\omega$ replacing the club filter with a filters is a possible:
\begin{definition}
    Let $\mathcal{F}$ be a filter over a cardinal $\kappa \geq \omega$, $\pi: \kappa \to \kappa$ be a function, and $T \subseteq P(\kappa)$. 
    Then, $\diamondsuit^-(\mathcal{F},\pi,T)$ holds if there is a sequence $\l \mathcal{A}_\alpha \:|\:\alpha < \kappa \r$ such that:
    \begin{enumerate}
        \item $\mathcal{A}_\alpha \subseteq P(\alpha)$,
        \item $|\mathcal{A}_\alpha |\leq \pi(\alpha)$,
        \item $\forall X \in T, \{\alpha < \kappa \: | \: X \cap \alpha\in \mathcal{A}_\alpha\}\in \mathcal{F}^+$,
    \end{enumerate}
    We say that $\diamondsuit^*(\mathcal{F},\pi,T)$ holds when there is a sequence $\l\mathcal{A}_\alpha\:|\:\alpha<\kappa\r$ that satisfies (1), (2), and 

    \begin{enumerate}
        \item[(3*)]  $\forall X \in T, \{\alpha < \kappa\:|\:X\cap \alpha\in \mathcal{A}_\alpha\}\in \mathcal{F}$.
    \end{enumerate}

\end{definition}   

\begin{remark}
    The principle $\diamondsuit^-(\text{Cub}_\kappa,id,P(\kappa))$ is the same as Jensen's $\diamondsuit^-(\kappa)$, and $\diamondsuit^*(\text{Cub}_\kappa,id,P(\kappa))$ is $\diamondsuit^*$. 
    Also, if $\mathcal{U}$ is an ultrafilter then $\diamondsuit^-(\mathcal{F},\pi,T)$ coincides with $\diamondsuit^*(\mathcal{F},\pi,T)$.
\end{remark}

Since we do not restrict the parameters, there are trivial instances of $\diamondsuit^-(\mathcal{F},\pi,T)$ e.g. if $\pi(\alpha)=2^{\alpha}$, one can take $\mathcal{A}_\alpha=P(\alpha)$. The strength of these principles is obtained by restricting $\pi$ to be as asymptotically small as possible and making $T$ as large as possible.

To formulate the restriction we have on $\pi$, let us introduce the relation on functions of almost equivalent and almost dominatio.  
Let $\mathcal{F}$ be a filter on a set $X$, and $f,g:X\to \kappa$.
Then, we define \begin{itemize}
    \item $f=_\mathcal{F}g$ if $\{x\in X\:|\: f(x)=g(x)\}\in\mathcal{F}$.
    \item $f\leq_\mathcal{F} g$ if $\{x\in X\:|\:f(x)\leq g(x)\}\in\mathcal{F}$.
    \item $f<_\mathcal{F} g$ if $\{x\in X\:|\:f(x)< g(x)\}\in\mathcal{F}$.
\end{itemize} 
Notice that $\leq_{\mathcal{F}}$ is linear if and only if $\mathcal{F}$ is an ultrafilter. 
In \cite{TomFanxin}, to get non-trivial and fruitful restrictions on $\pi$, the authors used the terminology of skies and constellations given by Puritz \cite{PURITZ1972215}. 

\begin{definition} (Puritz)
    Let $\mathcal{F}$ be a filter on $\omega$, $f,g:\omega\to\omega$. Define $\sky([g]_\mathcal{F})<sky([f]_\mathcal{F})$ if $\forall h:\omega\to\omega$,  $h\circ g <_\mathcal{F} f$. 
    Moreover, define $\sky([g]_\mathcal{F})\geq \sky([f]_\mathcal{F})$ if $\exists h:\omega\to\omega$, $h\circ g\geq_\mathcal{F} f$. 
\end{definition}

Notice that these are well-defined transitive relations on $\omega^\omega/=_{\mathcal{F}}$. In particular, the preorder $\leq$ induces an equivalence relation. We call each equivalence class a \textit{sky}. There exists a largest equivalence class, $sky([id]_{\mathcal{F}})$, and a least equivalence class consisting of the constant functions. 
\begin{definition} \label{def:1}
    We say that $\diamondsuit^-(\mathcal{U})$ holds for an ultrafilter $\mathcal{U}$ over $\omega$ if there exists $\pi,f:\omega\to\omega$ and $T\subseteq P(\omega)$ such that 
    \begin{enumerate}
        \item $\diamondsuit^-(\mathcal{U},\pi,T)$
        \item $|T|=2^{\aleph_0}$
        \item $\sky([\pi]_\mathcal{U})<\sky([id]_\mathcal{U})$
    \end{enumerate}
\end{definition}
The importance of this principle is justified by the following theorem: 
\begin{theorem}[{\cite[1.14]{TomFanxin}}]
    $\diamondsuit^-(\mathcal{U})\Rightarrow\mathcal{U}$ is Tukey-top.
\end{theorem}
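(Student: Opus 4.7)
The plan is to verify Tukey-topness of $\mathcal{U}$ using the characterization from the first theorem of this excerpt: exhibit a subset of $\mathcal{U}$ of size $\mathfrak{c}$ whose every countable subfamily has no common lower bound in $(\mathcal{U},\supseteq)$, that is, whose intersection falls outside $\mathcal{U}$. The natural candidate is $\{B_X : X\in T\}$, where $\l \mathcal{A}_n \mid n<\omega\r$ witnesses $\diamondsuit^-(\mathcal{U},\pi,T)$ and $B_X := \{n<\omega : X\cap n\in \mathcal{A}_n\}\in \mathcal{U}$ for every $X\in T$.

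The core claim I would prove is that for any pairwise distinct countable family $X_0, X_1, \ldots \in T$, the intersection $\bigcap_i B_{X_i}$ is not in $\mathcal{U}$. I would argue by contradiction: assume $U\in \mathcal{U}$ with $U\subseteq B_{X_i}$ for every $i<\omega$. Then for each $n\in U$, every $X_i\cap n$ lies in $\mathcal{A}_n$, so the counting function $g(n) := |\{X_i\cap n : i<\omega\}|$ is bounded above by $\pi(n)$ on $U$. Since the $X_i$ are pairwise distinct, any two agree on only a bounded initial segment, and hence $g$ is nondecreasing with $g(n)\to\infty$. I would then define a right-inverse $h(m) := \max\{n : g(n)\leq m\}$, finite for each $m$ and nondecreasing; tautologically $h(g(n))\geq n$, and monotonicity gives $h(\pi(n))\geq h(g(n))\geq n$ for every $n\in U$. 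Therefore $\{n : h(\pi(n))\geq n\}\in \mathcal{U}$. But the sky hypothesis $\sky([\pi]_\mathcal{U})<\sky([id]_\mathcal{U})$, applied to this particular $h$, asserts $h\circ \pi <_\mathcal{U} id$, i.e., $\{n : h(\pi(n))<n\}\in \mathcal{U}$, contradicting the preceding sentence.

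Finally, to confirm the witness has cardinality $\mathfrak{c}$, I would observe that the fibers of $X\mapsto B_X$ are at most countable: if $\{X\in T : B_X = B\}$ were uncountable for some $B$, one could extract a countable pairwise distinct subfamily whose intersection equals $B\in \mathcal{U}$, contradicting the core claim. Hence $\{B_X : X\in T\}$ has size $\mathfrak{c}$, and Tukey's characterization yields Tukey-topness. The main obstacle is the choice of $h$: the sky ordering's universal quantifier over $h$ is exactly what lets one tailor $h$ to whatever (possibly slowly growing) $g$ arises from the assumed countable family, without any a priori control on how fast the restrictions $X_i\cap n$ begin to diverge.
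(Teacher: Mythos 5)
Your proposal is correct and is essentially the argument behind the cited result (this paper only quotes the theorem from \cite{TomFanxin} without reproducing its proof): one takes the family $\{B_X : X\in T\}\subseteq\mathcal{U}$, shows any countably many pairwise distinct $X_i$ guessed on a common $U\in\mathcal{U}$ would force the counting function $g(n)=|\{X_i\cap n : i<\omega\}|\leq\pi(n)$ on $U$ to tend to infinity, and then contradicts $\sky([\pi]_\mathcal{U})<\sky([id]_\mathcal{U})$ with the tailored $h$, exactly as you do, before invoking Tukey's characterization. The only cosmetic point is to set $h(m)=0$ when $\{n : g(n)\leq m\}$ is empty so that $h$ is a total function on $\omega$; otherwise the argument, including the fiber-counting step giving a witness of size $\mathfrak{c}$, is complete.
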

Given a guessing sequence $\A=\l \mathcal{A}_n\mid n<\omega\r$ we consider \textit{the set of guessed reals}:
$$\text{G}(\vec{A})=\big\{X\mid \text{for infinitely many }n, X\cap n\in \mathcal{A}_n\big\}$$
Given a filter $\mathcal{F}$, we also let
$$\text{G}_{\mathcal{F}}(\vec{A})=\Big\{X\mid \{n\mid X\cap n\in \mathcal{A}_n\}\in\mathcal{F}\Big\}$$
Clearly, if $\mathcal{U}$ is a nonprincipal filter on $\omega$, $\text{G}_{\mathcal{F}}(\vec{A})\subseteq \text{G}(\vec{A})$. 

In this section, let us focus for now on the \textit{topological} small, i.e. meager. Recall that a subset $D$ of a topological space $A$ is dense if the closure of $D$ is equal to $A$, denoted $\overline{D}=A$. 
Similarly we say that $D\subseteq A$ is dense in $B\subseteq A$ if $D\cap B$ is dense in $B$. 
The complement of the closure of a set is the interior, denoted $D^\circ$, which is the largest open set contained in $D$. 
We say that a subset $X\subseteq A$ is \textit{nowhere dense} when there does not exist an open set $U\subseteq A$, such that $X$ is dense in $U$. 
Equivalently, $X\subseteq A$ is nowhere dense if $(\overline{X})^\circ=\emptyset$. 
We call a set $X\subseteq A$ \textit{meager} if $A$ is the countable union of nowhere dense sets. 
It can be shown that meager subsets of a set $X$ form an ideal which is closed under countable unions. Define a \textit{comeager} subset $A\subseteq X$ as the complement of a meager set.
Recall that a set $A\subseteq X$ is $G_\delta$ if it is the countable  intersection of open sets. 
 \begin{fact}\label{fact}
      Every dense $G_\delta$ set is comeager.
 \end{fact}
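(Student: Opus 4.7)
The plan is to unwind the definitions and observe that being comeager is the complement of a particular kind of meager set, one that arises naturally from writing a $G_\delta$ as an intersection of open sets.

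Let $A \subseteq X$ be dense and $G_\delta$. First I would fix a presentation $A = \bigcap_{n<\omega} U_n$ with each $U_n$ open, and note that because $A \subseteq U_n$ and $A$ is dense, each $U_n$ is itself open and dense in $X$. Then I would take complements to obtain
\[
X \setminus A \;=\; \bigcup_{n<\omega} (X \setminus U_n),
\]
so it suffices to show that each closed set $F_n := X \setminus U_n$ is nowhere dense.

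The main (and only) technical step is the equivalence: a closed set $F$ is nowhere dense if and only if its complement is open and dense. Since $U_n$ is open and dense, its closure is $X$, and so $F_n$ being closed has empty interior, because any nonempty open $V \subseteq F_n$ would force $U_n$ to miss $V$, contradicting density of $U_n$. Hence $F_n$ is closed with empty interior, i.e.\ $\overline{F_n}^{\,\circ} = F_n^{\,\circ} = \emptyset$, so $F_n$ is nowhere dense.

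Combining, $X \setminus A$ is a countable union of nowhere dense sets, hence meager, and therefore $A$ is comeager. I do not expect any serious obstacle here: the entire argument is essentially a translation between the $G_\delta$ presentation and the definition of meager, with the only substantive observation being that the complement of an open dense set is nowhere dense.
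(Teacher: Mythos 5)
Your proof is correct: writing the dense $G_\delta$ set as $\bigcap_n U_n$, observing each $U_n$ is open dense, and noting that the complement of an open dense set is closed with empty interior (hence nowhere dense) is exactly the standard argument. The paper states this fact without proof, so there is no alternative route to compare against; your write-up supplies precisely the justification the paper implicitly relies on.
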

 We now focus on a specific space - the \textit{Cantor space} $2^\omega$. Let $2^{<\omega}=\bigcup_{n<\omega}2^n$ where $2^n$ is the  collection of finite functions $f:n\to 2$. We will frequently identify between $2^n$ and $P(n)$, $2^{<\omega}$ and $P(\omega)$ with $2^\omega$.
   For two functions $f,g$, we say that $g$ is an \textit{initial segment} of $f$ or that $f$ \textit{extends} $g$,  and write $g\sqsubseteq f$, if $f\restriction \text{dom}(g)=g$. The topology on $2^\omega$ is generated by the cylinder sets $C_t=\{f\in 2^\omega\:|\: f \text{ extends } t\}$ for all $t\in 2^{<\omega}$. 

 \begin{fact}
     Let $\vec{A}=\l \mathcal{A}_n\mid n<\omega\r$ be a sequence such that $\mathcal{A}_n\subseteq P(n)$. Then $\text{G}(\vec{A})$ is a $G_\delta$-set in $2^\omega$.
 \end{fact}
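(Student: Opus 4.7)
The plan is to write $\text{G}(\vec{A})$ in the standard ``infinitely often'' form and observe that each of the underlying pieces is clopen. Define, for each $n<\omega$,
\[
U_n = \{X \in 2^\omega \mid X \cap n \in \mathcal{A}_n\}.
\]
First I would note that membership in $U_n$ depends only on $X \restriction n$, so $U_n = \bigcup_{t \in \mathcal{A}_n} C_t$. Since $\mathcal{A}_n \subseteq P(n)$ is finite and each cylinder $C_t$ with $t \in 2^n$ is clopen in $2^\omega$, the set $U_n$ is a finite union of clopen sets and hence clopen.

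Next I would unpack the quantifier ``for infinitely many $n$'' in the usual way:
\[
\text{G}(\vec{A}) = \bigcap_{N<\omega} \bigcup_{n \geq N} U_n.
\]
For each $N$, the set $\bigcup_{n \geq N} U_n$ is a (countable) union of open sets and is therefore open. Taking the countable intersection over $N$ then exhibits $\text{G}(\vec{A})$ as a $G_\delta$ set, which is exactly what is required.

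There is no real obstacle here: the only thing to verify is that $U_n$ is open, which is immediate from the fact that $\{X \mid X \cap n = t\} = C_t$ for $t \in 2^n$ (under the identification of $P(n)$ with $2^n$), and that $\mathcal{A}_n$ is finite. Consequently the proof is essentially a one-line unpacking of the definition of $\text{G}(\vec{A})$, which is why the statement is phrased as a fact rather than a lemma.
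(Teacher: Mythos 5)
Your proof is correct and is exactly the standard argument the paper has in mind (the paper states this as a fact without proof): writing $\text{G}(\vec{A})=\bigcap_{N}\bigcup_{n\geq N}U_n$ with each $U_n=\bigcup_{t\in\mathcal{A}_n}C_t$ clopen. Nothing is missing.
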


The next simple example shows that it is possible for a set of guessed reals to be comeager. 

\begin{example}
    Enumerate $2^{<\omega}=\{t_0,t_1,t_2,\ldots\}$ and choose any $x_n\in C_{t_n}$ where $C_{t_n}=\{X\in 2^\omega\:|\: t_n\sqsubset X\}$.  
    Partition $\mathbb{N}$ into infinitely many infinite sets $Y_n$ for $n<\omega$. Define for each $m\in Y_n$, $\mathcal{A}_m=\{x_n\cap m\}$. So $x_n$ is guessed on $Y_n$ and therefore the dense set $\{x_n\mid n<\omega\}$ is included in $\text{G}(\vec{A})$. By the two previous facts, we conclude that $\text{G}(\vec{A})$ is comeager.
\end{example}

\begin{corollary}
    For some sequence $\vec{A}=\l \mathcal{A}_n\mid n<\omega\r$, with $|\mathcal{A}_n|=1$, $\text{G}(\vec{A})$ is comeager.
\end{corollary}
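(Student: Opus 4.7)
The plan is to observe that the corollary is simply a restatement of the example just above, once one verifies that the sequence constructed there has singleton fibers. Fix a partition $\mathbb{N}=\bigsqcup_{n<\omega}Y_n$ into infinitely many infinite sets. Each $m\in\mathbb{N}$ belongs to a unique $Y_{n(m)}$, and the construction defines $\mathcal{A}_m=\{x_{n(m)}\cap m\}$, so $|\mathcal{A}_m|=1$ holds by construction for every $m<\omega$. Thus the sequence $\vec{A}=\l\mathcal{A}_n\mid n<\omega\r$ witnesses the required cardinality condition.

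It remains to justify comeagerness of $\text{G}(\vec{A})$. I would argue in three short steps. First, for each $n<\omega$ the real $x_n$ belongs to $\text{G}(\vec{A})$: for every $m\in Y_n$ we have $x_n\cap m\in\mathcal{A}_m$, and since $Y_n$ is infinite, $x_n$ is guessed infinitely often. Second, $\{x_n\mid n<\omega\}$ is dense in $2^\omega$: the cylinders $\{C_t\mid t\in 2^{<\omega}\}$ form a basis for the topology, and by construction $x_n\in C_{t_n}$, so every basic open set meets $\{x_n\mid n<\omega\}$. Third, by the earlier fact $\text{G}(\vec{A})$ is $G_\delta$, so it is a $G_\delta$-set containing a dense subset, hence dense itself. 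Then Fact \ref{fact} immediately yields that $\text{G}(\vec{A})$ is comeager.

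There is no real obstacle here; the only thing one has to be careful about is the bookkeeping of the partition, which is needed both to guarantee that every $x_n$ gets infinitely many opportunities to be guessed (so that it lies in $\text{G}(\vec{A})$, not merely in $\text{G}_\mathcal{F}(\vec{A})$ for some filter $\mathcal{F}$) and to ensure that each individual $\mathcal{A}_m$ remains a singleton. Since the partition is into infinitely many infinite pieces, both requirements are met simultaneously, and the corollary follows.
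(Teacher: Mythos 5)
Your proposal is correct and follows exactly the paper's route: the corollary is read off from the preceding example, whose sequence has singleton $\mathcal{A}_m$ by construction, with comeagerness of $\text{G}(\vec{A})$ obtained from density of $\{x_n\mid n<\omega\}$ together with the facts that $\text{G}(\vec{A})$ is $G_\delta$ and that dense $G_\delta$ sets are comeager. No gaps; nothing further is needed.
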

Let us address now the Question \ref{question1}. First, note that the original construction of a $\diamondsuit^-(\mathcal{U})$ from \cite{TomFanxin} does not work. To see this let us review the construction.
Let us establish some notations. Given $S\subseteq 2^{<\omega}$, we let $\mathcal{L}_n(S)=S\cap 2^n$. We say that $r\in 2^\omega$ is a \textit{branch} through $S$ if there are infinitely many $n$'s such that $r\restriction n\in \mathcal{L}_n(S)$, and denote the set of branches by
$$Br(S)=\{r\in 2^n\mid r \text{ is a branch through }S\}.$$
The construction of $\diamondsuit^-$ ultrafilter used a special $S$ whose properties are described in the following lemma for \cite{TomFanxin}:
\begin{lemma}\label{mainlemma}
    Let $\pi:\omega\to\omega$ be any infinite-to-one function. Then there are two sequences $\l S_n\mid n<\omega\r$ and $\l m_n\mid n<\omega\r$ such that:
\begin{enumerate}
        \item [(i)] $S_n\subseteq 2^{\leq m_n}$.
        \item [(ii)] if $n\leq m$, then $S_n=S_m\cap 2^{\leq m_n}$ (so $m_n$ is also increasing).
        \item [(iii)] for each $k<m_n$, $S_n$ has at most $\pi(k)$-many nodes at level $k$.
\item [(iv)]  for every $n$, any node in $t\in S_{n+1}\setminus S_n$ is above some maximal node of $S_n$ (a node in $S_n$ is maximal if it has no proper extension in $S_n$).

    \item [(v)] if $n$ is even then every maximal node of $S_n$ branches in $S_{n+1}$ (and has exactly 2 extensions in $S_{n+1})$.
\item [(vi)] if $n$ is odd then:
\begin{enumerate}
    \item none of the maximal nodes of $S_n$ branches in $S_{n+1}$ (so the part of $S_{n+1}$ above that node forms a $\sqsubseteq$-chain).
    \item for any finite subset $F$ of the set of maximal nodes in $S_n$, there is a level $m_n\leq k<m_{n+1}$ such that $|F|=\pi(k)$  and every node in $F$ has an extension in $S_{n+1}$ at level $k$. In particular $\mathcal{L}_k(S_{n+1})$ consists exactly of those extensions.
\end{enumerate}
\end{enumerate}
\end{lemma}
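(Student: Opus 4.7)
The construction proceeds by recursion on $n$, maintaining the invariant that the set $M_n$ of $\sqsubseteq$-maximal nodes of $S_n$ all lie at the common level $m_n$ and satisfy $\pi(m_n) \geq |M_n|$. Since $\pi$ is infinite-to-one, each fiber of $\pi$ is infinite, so arbitrarily late levels realize any prescribed $\pi$-value. I begin by choosing $m_0$ with $\pi(m_0)\geq 1$, fixing any $\sigma_0 \in 2^{m_0}$, and setting $S_0 = \{\sigma_0\}$; then $|M_0|=1\leq \pi(m_0)$, and (i)--(iii) are immediate.

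For the even step $n\to n+1$, I install the two diverging extensions demanded by (v). Pick $m_{n+1}>m_n$ with $\pi(m_{n+1})\geq 2|M_n|$, and for each $t\in M_n$ fix $u_t^0, u_t^1 \in 2^{m_{n+1}}$ with $t^\frown 0 \sqsubseteq u_t^0$ and $t^\frown 1 \sqsubseteq u_t^1$. Set $S_{n+1} = S_n \cup \{u_t^0, u_t^1 : t\in M_n\}$. All new nodes sit at level $m_{n+1}>m_n$, so (ii) holds; $|\mathcal{L}_{m_{n+1}}(S_{n+1})| = 2|M_n|\leq \pi(m_{n+1})$, so (iii) is maintained; (iv) and (v) are immediate. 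The updated $M_{n+1}$ has size $2|M_n|$, lies at level $m_{n+1}$, and $\pi(m_{n+1})\geq |M_{n+1}|$.

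For the odd step $n\to n+1$, I use a single chain through each maximal node to encode every nonempty subset of $M_n$. Enumerate those subsets as $F_1, \ldots, F_N$ with $N = 2^{|M_n|}-1$, and, using that every fiber of $\pi$ is infinite, pick pairwise distinct $k_1, \ldots, k_N > m_n$ with $\pi(k_i) = |F_i|$. Then select $m_{n+1}>\max_i k_i$ with $\pi(m_{n+1}) \geq |M_n|$. For each $t \in M_n$, fix $r_t \in 2^\omega$ with $t \sqsubseteq r_t$, and define
\[
S_{n+1} \;=\; S_n \;\cup\; \bigcup_{i=1}^{N}\{r_t\restriction k_i : t \in F_i\} \;\cup\; \{r_t\restriction m_{n+1} : t\in M_n\}.
\]
Each new node strictly above a given $t\in M_n$ is an initial segment of the fixed $r_t$, so these extensions form a $\sqsubseteq$-chain, yielding (vi)(a); the level $k_i$ witnesses $F_i$ in (vi)(b) by construction; and (iii) holds at every $k\in[m_n,m_{n+1})$ because either $k=k_i$ with $|\mathcal{L}_k(S_{n+1})| = |F_i| = \pi(k_i)$, or no new node is placed at $k$. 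The updated $M_{n+1} = \{r_t\restriction m_{n+1} : t\in M_n\}$ lies at level $m_{n+1}$ with $|M_{n+1}| = |M_n|\leq \pi(m_{n+1})$.

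The main obstacle is aligning the count constraint (iii) with condition (vi)(b) at the odd step: one must produce $2^{|M_n|}-1$ distinct witness levels above $m_n$, each carrying precisely the prescribed value of $\pi$. This is exactly where the infinite-to-one hypothesis is essential; without infinitely many members in each fiber of $\pi$, some subset size could be exhausted or never realized beyond the current threshold. Once these witnesses are arranged, the invariant $\pi(m_n)\geq |M_n|$ propagates automatically, so (iii) is never violated at the boundary level $m_n$ in the next stage.
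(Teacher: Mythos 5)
Your recursive construction is correct: the invariant that all maximal nodes of $S_n$ sit at the common level $m_n$ with $|M_n|\leq\pi(m_n)$, the splitting-to-the-top step at even stages, and the chain-plus-witness-levels step at odd stages (using infinitely many levels in each fiber of $\pi$ to realize every subset size, with distinct maximal nodes giving distinct restrictions at each witness level) verify all of (i)--(vi), and it also yields the consequences (I) and (II) used later. The paper itself does not reprove this lemma (it is quoted from \cite{TomFanxin}), but your argument is exactly the natural construction the statement encodes, so there is nothing further to compare.
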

Once the $S_n$ are constructed we let $S=\bigcup_{n<\omega}S_n$ and let $\mathcal{F}$ be the filter generated by the sets $B_r:=\{n<\omega\mid r\restriction n=\mathcal{L}_n(S)\}$ for $r\in Br(S)$. 
Then \cite[Cor 1.20]{TomFanxin} shows that it is possible to extend $\mathcal{F}$ to an ultrafilter $\mathcal{U}$ satisfying $\diamondsuit^-(\mathcal{U})$ witnessed by $\vec{A}$, where $\mathcal{A}_n=\mathcal{L}_n(S)$. 
Since we made sure that every branch of $S$ is guesses on a set in the ultrafilter, we obtain that $\text{G}_{\mathcal{U}}(\vec{A})=\text{G}(\vec{A})=Br(S)$. 
Let us denote by $d_n$ to be the number of maximal nodes in $S_n$. 
Thus:
\begin{enumerate}
    \item [(I)] $d_n=2^{\lceil\frac{n}{2}\rceil}$ (follows from (v) and (vi)a)
    \item [(II)] $m_{n+1}-m_n\geq 2^{d_n}$. (follows from (vi)b)
\end{enumerate}
\begin{theorem}
     Let $S$ be a tree constructed from Lemma \ref{mainlemma}, then $Br(S)$ is a meager set.
\end{theorem}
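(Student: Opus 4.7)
The plan is to show that $Br(S)$ coincides with the body $[S]=\{r\in 2^\omega\mid \forall k,\ r\restriction k\in S\}$ of the tree $S$, and that $[S]$ is closed with empty interior in $2^\omega$; this makes it nowhere dense, hence meager.

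First I would verify the identification $Br(S)=[S]$. Since each $S_n$ is a subtree of $2^{\leq m_n}$ and property (ii) of Lemma~\ref{mainlemma} yields $S_n\subseteq S_{n+1}$, the union $S=\bigcup_n S_n$ is a subtree of $2^{<\omega}$ (closed under initial segments). Consequently, if $r\in 2^\omega$ satisfies $r\restriction n\in\mathcal{L}_n(S)$ for infinitely many $n$, then for every fixed $k$ one may pick some $n>k$ with $r\restriction n\in S$ and take the initial segment to conclude $r\restriction k\in S$. Hence $Br(S)=[S]$, which is a closed subset of the Cantor space.

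Next I would show $[S]$ has empty interior; because $[S]$ is closed, this already gives that it is nowhere dense. Fix an arbitrary $t\in 2^{<\omega}$; the goal is to produce an extension $t'\sqsupseteq t$ with $t'\notin S$, so that $C_{t'}\cap[S]=\emptyset$ and $C_t\not\subseteq[S]$. Choose $n$ odd and large enough that $m_n>|t|$, and pick any level $k$ strictly between $m_n$ and $m_{n+1}$. By (vi)(b) of Lemma~\ref{mainlemma}, $\mathcal{L}_k(S)$ consists exactly of extensions of some subset $F$ of the maximal nodes of $S_n$ with $|F|=\pi(k)$, so $|\mathcal{L}_k(S)|=\pi(k)\leq d_n=2^{\lceil n/2\rceil}$. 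On the other hand, the number of descendants of $t$ at level $k$ is $2^{k-|t|}\geq 2^{m_n+1-|t|}$. By (II), the inequality $m_{n+1}-m_n\geq 2^{d_n}$ forces $m_n$ to grow at least doubly exponentially in $n$, so for $n$ large $2^{m_n+1-|t|}$ vastly exceeds the singly exponential bound $d_n$. Hence some descendant $t'$ of $t$ at level $k$ must lie outside $S$.

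Combining these two steps, $[S]$ is a closed subset of $2^\omega$ with empty interior, hence nowhere dense and meager; transferring via $Br(S)=[S]$ yields the theorem. The conceptual core of the proof is the identification $Br(S)=[S]$, which rests on the tree structure implicit in the construction; after that, what remains is a routine comparison between the doubly exponential level gaps delivered by (II) and the only singly exponential level counts permitted by (I), so I expect no serious obstacle beyond confirming that the $S_n$ really do form an increasing chain of subtrees.
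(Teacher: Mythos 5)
The central step of your argument fails: $S$ is \emph{not} closed under initial segments, so the identification $Br(S)=[S]$ is false. Property (ii) of Lemma \ref{mainlemma} only gives coherence of the stages ($S_n=S_m\cap 2^{\leq m_n}$); it does not make $S$ a subtree. In fact conditions (iii) and (vi)(b) actively force levels to be skipped: if $k\in[m_n,m_{n+1})$ is the level assigned by (vi)(b) to a finite set $F$ of maximal nodes of $S_n$ with $|F|=\pi(k)<d_n$, then $\mathcal{L}_k(S)$ consists \emph{exactly} of extensions of the members of $F$, while the chains sitting above the other $d_n-|F|$ maximal nodes have no node at level $k$ (their higher nodes restrict to strings of length $k$ that are not in $S$); if $S$ were downward closed, level $k$ would contain at least $d_n>\pi(k)$ nodes, contradicting (iii). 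Consequently, with your definition $[S]=\{r\mid\forall k\ r\restriction k\in S\}$ one actually gets $[S]=\emptyset$ once some odd stage has at least two maximal nodes, since a branch passing through a maximal node $x$ of $S_n$ misses every level assigned to a set $F$ with $x\notin F$; meanwhile $Br(S)$ has size continuum (that is the whole point of the construction). So $Br(S)$ need not be closed, and your final step ``$t'\notin S$ implies $C_{t'}\cap Br(S)=\emptyset$'' does not follow: a branch can pass through $t'\notin S$ and re-enter $S$ at higher levels.

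The idea is salvageable, but only after two repairs that change what is being counted: work with the downward closure $\overline{S}$ of $S$, note $Br(S)\subseteq[\overline{S}]$ (an inclusion, not an equality, suffices for an upper bound), and then bound the width of $\overline{S}$ -- not of $S$ -- at levels $k\in[m_n,m_{n+1})$. That bound does not come from $\pi(k)$ or (iii); it comes from (iv) together with (vi)(a): above each of the $d_n$ maximal nodes of $S_n$ the next stage is a single $\sqsubseteq$-chain, so $\overline{S}$ has width on the order of $d_{n+2}=2^{\lceil (n+2)/2\rceil}$ there, which (I) and (II) make negligible against $2^{k-|t|}$. This repaired route would in fact prove the stronger statement that $Br(S)$ is nowhere dense, and is genuinely different from the paper's proof, which does not attempt a closed-set bound at all: it invokes Blass's characterization of comeager sets via $Match(x,\Pi)$, choosing the interval partition at the levels $m_{2n+1}$ and building one real $x$ that disagrees with each of the $d_{2n+1}$ linear continuations $y^n_i$ on an initial piece of each interval (possible since $d_{2n+1}<2^{d_{2n+1}-1}$), so that every branch avoids $Match(x,\Pi)$. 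As written, however, your proof has a genuine gap at its first and main step.
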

The proof uses the characterization of meager sets in $2^\omega$ which can be found in \cite[Proposition 5.3]{Blass2010}. Let $M\subseteq 2^\omega$,
\begin{equation}
    M\text{ is comeager iff }\exists x\in 2^\omega\exists \Pi=\l k_n\mid n<\omega\r, \ Match(x,\Pi)\subseteq M.
\end{equation}
where $Match(x,\Pi)$ consists of all $y$ such that for infinitely many $n$'s $y\restriction [k_n,k_{n+1})=x\restriction [k_n,k_{n+1})$.
\begin{proof}[\textit{Proof of Theorem.}]
    Consider $k_n=m_{2n+1}$ and let us define $x$. By (vi)b every maximal node in $S_{2n+1}\subseteq 2^{\leq m_{2n+1}}$ extends linearly to the levels in $[m_{2n+1},m_{2n+2})$. Let $x^n_1,...,x^n_{d_{2n+1}}$ enumerate the maximal nodes in $S_{2n+1}$. 
    
        For each $i$, let $y^n_i$ be a maximal branch above $x_i$ in $S_{2n+2}\setminus S_{2n+1}$ which is unique by (vi)a. 

    Since $x_i$ extends to at least  $2^{d_{2n+1}-1}$-many levels in $S_{2n+2}$ (by (vi)b), it follows that $y^n_i\in \mathcal{L}_k(S)$ for some $k\geq m_{2n+1}+2^{d_{2n+1}-1}$.
    Since $2^{d_{2n+1}-1}> d_{2n+1}$ (indeed $d_{2n+1}> 2$), we can define $x\restriction[m_{2n+1},m_{2n+1}+2^{d_{2n+1}-1})$ different than $$\{y^n_1\restriction [m_{2n+1},m_{2n+1}+2^{d_{2n+1}-1}),...,y^n_{d_{2n+1}}\restriction [m_{2n+1},m_{2n+1}+2^{d_{2n+1}-1})\}$$
    This defines $x$ on the intervals $[m_{2n+1},m_{2n+1}+2^{d_{2n+1}-1})$ and we extend $x$ arbitrarily so that $x\in 2^\omega$. 
    
    We claim that $Match(x, \l k_n\mid n<\omega\r)$ is disjoint from $Br(S)=\text{G}_{\mathcal{U}}(\vec{A})$. Indeed, let $r\in Br(S)$ be any branch. Since $r\restriction m\in \mathcal{L}_m(S)$ for infinitely many $m$'s, by (iv), there is a maximal branch $y$ in $S_{2n+2}$ such that $y\sqsubseteq r$. Again by (iv), there is $1\leq i\leq d_{2n+1}$ such that $x_i\sqsubseteq y$. By (vi)a and by the choice of $y^n_i$, $y=y^n_i$ for some $i$. It follows that $$x\restriction [m_{2n+1},m_{2n+1}+2^{d_{2n+1}-1})\neq y^i_n\restriction [m_{2n+1},m_{2n+1}+2^{d_{2n+1}-1})=r\restriction [m_{2n+1},m_{2n+1}+2^{d_{2n+1}-1}).$$ It follows that $x$ is different than $r$ on all the intervals and therefore $r\notin  Match(x,\l k_n\mid n<\omega\r)$.  
\end{proof}
We have the following partial answer to the question:
\begin{proposition}\label{thm comeager}
   After adding one Cohen real, there is an ultrafilter $\mathcal{U}$ over $\mathbb{N}$ with a sequence $\vec{A}$ witnessing $\diamondsuit^-(\mathcal{U})$, such that $\text{G}_{\mathcal{U}}(\vec{A})$ is non-meager. 
\end{proposition}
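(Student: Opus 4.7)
The strategy is to construct both $\mathcal{U}$ and $\vec{A}$ in $V[c]$ by forcing with a countable atomless poset $\mathbb{Q}$ (hence equivalent to Cohen forcing) that builds a generic tree $S \subseteq 2^{<\omega}$ with richer branching than the trees of Lemma~\ref{mainlemma}. Conditions of $\mathbb{Q}$ are finite approximations $(S_k, m_k)$ retaining the level bound $|\mathcal{L}_n(S_k)| \leq \pi(n)$ and the FIP scaffold (vi)b, but with (v) and (vi)a weakened so as to permit extra branching above maximal nodes, subject only to the $\pi$-bound. Each condition is finite, so $\mathbb{Q}$ is countable atomless, and the generic object is definable from the Cohen real.

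Two countable families of dense subsets of $\mathbb{Q}$ drive the argument. The \emph{density} family: for each $t \in 2^{<\omega}$ and each $N$, the set $D_{t,N}$ of conditions $p$ with some $n \in (N, m_p]$ satisfying $\mathcal{L}_n(S_p) \cap C_t \neq \emptyset$. The \emph{FIP} family: for each finite $F \subseteq 2^{<\omega}$ and each $N$, the set $D_{F,N}$ of conditions $p$ for which $F \subseteq S_p$ implies some $n \in (N, m_p]$ has $\mathcal{L}_n(S_p)$ containing an extension of every $f \in F$. Both families are dense because $\pi(n) \to \infty$ lets us extend the tree and add one extra node at a sufficiently high level.

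In the generic extension $V[G] = V[c]$, set $S := \bigcup_k S_k$ and $\vec{A} := \langle \mathcal{L}_n(S) \mid n < \omega \rangle$. Writing $U_n := \bigcup_{s \in \mathcal{L}_n(S)} C_s$, the density family makes each $\bigcup_{n \geq N} U_n$ dense open, so $Br(S) = \bigcap_N \bigcup_{n \geq N} U_n$ is a comeager $G_\delta$. The FIP family then yields the finite intersection property for $\{B_r : r \in Br(S)\}$: for any finite $F_0 \subseteq Br(S)$, an inductive argument produces a common $n_0$ with $r \restriction n_0 \in S$ for each $r \in F_0$ and the $r \restriction n_0$ pairwise incomparable, after which $D_{F,N}$ applied to $F := \{r \restriction n_0 : r \in F_0\}$ (for every $N$) produces infinitely many $n \in \bigcap_{r \in F_0} B_r$. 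Extending the generated filter to an ultrafilter $\mathcal{U}$, the sequence $\vec{A}$ witnesses $\diamondsuit^-(\mathcal{U})$ with $T = Br(S)$ of cardinality $2^{\aleph_0}$, and $\text{G}_\mathcal{U}(\vec{A}) \supseteq Br(S)$ is comeager in $2^\omega$, hence non-meager.

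The main obstacle is the tension between weakening (v) and (vi)a enough to force density and preserving $|\mathcal{L}_n(S_k)| \leq \pi(n)$ at every level: aggressive branching violates the bound at small $n$, while conservative branching fails to produce density. The resolution parallels the alternating structure of Lemma~\ref{mainlemma}, postponing additional branching to levels where $\pi$ has grown enough slack. A secondary subtlety is the bootstrapping in the FIP argument, where branches must share common stages of $S$; this requires arranging in the design of $\mathbb{Q}$ that each relevant node of $S$ admits a ``principal'' extension at prescribed levels, so that branches through that node necessarily pass through $S$ at those levels, rather than drifting off onto other extensions permitted by the relaxed branching.
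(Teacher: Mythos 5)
The central gap is the finite–intersection step. Your dense sets $D_{F,N}$ only ensure that, for a finite set $F$ of nodes of $S$, some high level of $S$ contains an extension of each node of $F$. But $n\in B_r$ requires $r\restriction n\in\mathcal{L}_n(S)$, i.e.\ that the branch $r$ itself passes through level $n$ of $S$; knowing that \emph{some} extension of $r\restriction n_0$ lies in $\mathcal{L}_n(S)$ says nothing about $r$. You acknowledge this and propose ``principal'' extensions that every branch through a node must follow at prescribed levels, but that device is incompatible with the density family you use to make $Br(S)$ comeager: if $Br(S)$ is comeager then through every node there are continuum many branches of $S$ entering every cylinder above it, and these cannot all follow a single principal extension; the branches that deviate are exactly the ones your density sets create, and nothing in the design of $\mathbb{Q}$ synchronizes their guessing levels with the rest of the family. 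So the assertion that $\{B_r : r\in Br(S)\}$ has the finite intersection property --- hence that $\text{G}_{\mathcal{U}}(\vec{A})\supseteq Br(S)$ for a single ultrafilter $\mathcal{U}$ --- is not established. Note that making $Br(S)$ comeager by itself is cheap (the example in Section 2 does it with singleton levels); the hard part is precisely combining comeagerness with one ultrafilter guessing \emph{all} branches, and if your argument worked it would answer affirmatively the question posed immediately after this proposition, which the paper explicitly leaves open.

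A second gap is that you never verify clause (3) of Definition \ref{def:1}, the requirement $\sky([\pi]_{\mathcal{U}})<\sky([id]_{\mathcal{U}})$, which is what makes $\diamondsuit^-(\mathcal{U})$ nontrivial. In Lemma \ref{mainlemma} this is exactly what the rigid clause (vi)(b) is engineered for (levels whose entire content consists of the $\pi(k)$-many synchronized extensions of a prescribed finite set of maximal nodes, with $\pi$ infinite-to-one); once you allow extra branching at or near those levels to obtain density, either that exactness is destroyed or the new nodes must be exiled to other levels, and in either case the sky computation has to be redone and is not sketched. The paper's own proof takes a different and much simpler route that sidesteps both issues: force with the countable poset of finite approximations to the sequence $\vec{A}$ itself (equivalent to adding one Cohen real), invoke Theorem 2.1 of the earlier paper to get an ultrafilter $\mathcal{U}$ in $V[c]$ for which $\vec{A}$ guesses every \emph{ground-model} real on a $\mathcal{U}$-large set (so $T=2^{\omega}\cap V$, which has size continuum), and then use the classical fact that the ground-model reals are non-meager --- though not comeager --- after adding a single Cohen real. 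Unless you supply a genuinely new synchronization mechanism for the branches, your construction does not yield the proposition, let alone the stronger comeager statement.
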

\begin{proof}
    Let $\pi$ be an infintie-to-one function i.e. for every $n$, $\pi^{-1}[\{n\}]$ is infinite. Let us force with $\text{Add}(\omega,1)$. We can identify $\text{Add}(\omega,1)$ with the forcing poset $\mathbb{P}$ consisting of all functions $f:n\to P(\omega)$ such that $f(n)\in [P(n)]^{\leq \pi(n)}$, ordered by end extension. Clearly, any generic for $\mathbb{P}$ produces a sequence $\vec{A}=\l \mathcal{A}_n\mid n<\omega\r$ such that $\mathcal{A}_n\in [P(n)]^{\leq n}$. By \cite[Theorem 2.1]{TomFanxin}, there is an ultrafilter $\mathcal{U}$ in the extension such that $\vec{A}$ witnesses that every ground model real is guessed on a $\mathcal{U}$-large set. It is well known (see for example \cite{Blass2010}) that the set of ground model reals after adding a single Cohen function is non-meager.
    \end{proof}
    The following question remains open:
\begin{question}
    Can there be a nonprincipal ultrafilter $\mathcal{U}$ such that $\diamondsuit^-(\mathcal{U})$ as witnessed by $\vec{A}$ such that $\text{G}_{\mathcal{U}}(\vec{A})$ is comeager?
\end{question}

\section{Probability Preliminaries}
As in the previous section, we often identify the elements of the Cantor space $2^\omega$ with subsets of $\omega$ via the indicator function. The \textit{standard Borel probability space} is the triple $(2^\omega,\Omega_0,\mathbb{P}_0)$, where $\Omega_0$ is the $\sigma$-algebra generated by the cylinders $C_t$ for $t\in 2^{<\omega}$ i.e. the Borel sets of the cantor space. The probability function $\mathbb{P}_0$ is then defined by setting $\mathbb{P}_0(C_t)=\frac{1}{2^{|t|}}$, where $|t|$ is the length of $t$. 

A theorem that provides some insights into guessing sequences is the Borel-Cantelli lemma.
Let $\l X_n\:|\: n<\omega\r$ be a sequence of sets. 
Then the limsup event is the event that infinitely-many of the $X_n$ occured, i.e. $\lim\sup X_n:=\bigcap_{n<\omega}\bigcup_{m\geq n}X_m$. The following is a basic result in probability theory \cite{Borel,Cantelli}:

\begin{lemma} [The Borell Cantelli Lemma]
    Let $\l E_n\:|\: n<\omega\r$ be a sequence of events in some probability space $\l X,\Omega,\mathbb{P}\r$.
    \begin{enumerate}
        \item If $\sum_{n=0}^\infty \mathbb{P}(E_n)<\infty$ then $\mathbb{P}(\underset{n\to\infty}{\limsup} \:E_n)=0$.
        \item If the  events are pairwise independent, and $\sum_{n=0}^\infty \mathbb{P}(E_n)=\infty$, then $\mathbb{P}(\underset{n\to\infty}{\limsup} \:E_n)=1$.
    \end{enumerate} 
\end{lemma}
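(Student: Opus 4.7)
The plan is to handle the two halves separately, since they rely on different ideas and only the second uses the independence hypothesis.

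For (1), I would write $\limsup_n E_n = \bigcap_{n} F_n$ with $F_n = \bigcup_{m \geq n} E_m$. Countable subadditivity gives $\mathbb{P}(F_n) \leq \sum_{m \geq n} \mathbb{P}(E_m)$, which is the tail of a convergent series and hence tends to $0$. Since the $F_n$ are decreasing in $n$, continuity of measure from above yields $\mathbb{P}(\bigcap_n F_n) = \lim_n \mathbb{P}(F_n) = 0$. No independence assumption is used.

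For (2), under the weaker hypothesis of \emph{pairwise} (rather than mutual) independence, the cleanest route is a second-moment argument. Set $S_N = \sum_{k=0}^{N} \mathbf{1}_{E_k}$ and $\mu_N = \mathbb{E}[S_N] = \sum_{k=0}^{N} \mathbb{P}(E_k)$; by hypothesis $\mu_N \to \infty$. Pairwise independence makes the cross covariances vanish, so
\[
\mathrm{Var}(S_N) \;=\; \sum_{k=0}^{N} \mathbb{P}(E_k)\bigl(1-\mathbb{P}(E_k)\bigr) \;\leq\; \mu_N.
\]
Chebyshev's inequality then gives $\mathbb{P}(S_N \leq \tfrac{1}{2}\mu_N) \leq 4\,\mathrm{Var}(S_N)/\mu_N^2 \leq 4/\mu_N \to 0$, i.e.\ $S_N \to \infty$ in probability.

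The remaining step is to upgrade this to almost sure divergence of $S_N$, which is exactly $\mathbb{P}(\limsup_n E_n) = 1$. Because $S_N$ is monotonically nondecreasing in $N$, convergence in probability to $\infty$ promotes for free to $S_N \uparrow \infty$ almost surely: if instead $\mathbb{P}(S_\infty \leq M) \geq \delta > 0$ for some $M$, then $\mathbb{P}(S_N \leq M) \geq \delta$ for every $N$, contradicting convergence in probability to $\infty$. Alternatively, one may extract $N_j$ with $\mu_{N_j} \geq 4^j$, apply the Chebyshev bound to obtain $\sum_j \mathbb{P}(S_{N_j} \leq \mu_{N_j}/2) < \infty$, and invoke part (1). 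The only genuine obstacle is the variance estimate, which is precisely where pairwise independence is used; with full mutual independence one could instead bound $\mathbb{P}\bigl(\bigcap_{m=n}^{M} E_m^c\bigr) = \prod_m (1-\mathbb{P}(E_m)) \leq \exp\bigl(-\sum_m \mathbb{P}(E_m)\bigr)$ directly, avoiding second moments, but the second-moment approach is exactly what the stated hypothesis demands.
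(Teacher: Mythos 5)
Your proof is correct, and in fact the paper offers no proof to compare against: it states the lemma as a classical result and cites Borel and Cantelli. Both halves of your argument are the standard ones. Part (1) via subadditivity on the tails $F_n=\bigcup_{m\geq n}E_m$ and continuity from above is exactly right and, as you note, uses no independence. For part (2), the statement as given assumes only \emph{pairwise} independence (this strengthening is due to Erd\H{o}s and R\'enyi), so the second-moment/Chebyshev route you take is not merely one option but essentially forced: the classical argument bounding $\mathbb{P}\bigl(\bigcap_{m=n}^{M}E_m^c\bigr)$ by $\prod_{m}(1-\mathbb{P}(E_m))\leq\exp\bigl(-\sum_m\mathbb{P}(E_m)\bigr)$ genuinely needs mutual independence, a point you correctly flag. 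Your variance bound $\mathrm{Var}(S_N)\leq\mu_N$ from vanishing covariances, the Chebyshev estimate $\mathbb{P}(S_N\leq\tfrac{1}{2}\mu_N)\leq 4/\mu_N$, and the upgrade from convergence in probability to almost sure divergence using monotonicity of $S_N$ (or, alternatively, the subsequence argument feeding back into part (1)) are all sound. Nothing is missing.
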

A fundamental limitation on the possible size of $\text{G}_{\mathcal{U}}(\vec{A})$ is given then by the Borel-Cantelli lemma. 
In \cite[1.5]{TomFanxin}, it was noticed that given functions $f,\pi:\omega\to \omega$ and $\mathcal{A}_n\in [P(f(n))]^{\leq \pi(n)}$, either $\sum_{n=0}^{\infty}\frac{\pi(n)}{2^{f(n)}}=\infty$ or $\mathbb{P}(\text{G}(\vec{A}))=0$. 
Notice that if $\mathcal{U}$ is an ultrafilter and  $\sky([\pi]_\mathcal{U})<\sky([f]_\mathcal{U})$ then there is $\pi'\in [\pi]_{\mathcal{U}}$ and $f'\in[f]_{\mathcal{U}}$ such that $\sum_{n=0}^{\infty} \frac{\pi'(n)}{2^{f'(n)}}<\infty$.
Thus, a set of guessed reals must be probability zero. 
However, using the second Borel-Cantelli lemma, we get that if the aforementioned sum diverges, then the guessed set is probability one. 
\begin{theorem}\label{Thm: Other direction}
    If $\sum_{n=0}^{\infty}\frac{\pi(n)}{2^n}=\infty$, then there is a sequence $\vec{A}=\l \mathcal{A}_n\mid n<\omega\r$, with $|\mathcal{A}_n|=\pi(n)$ such that $\mathbb{P}(\text{G}(\vec{A}))=1$.
\end{theorem}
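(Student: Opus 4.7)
The plan is to avoid the combinatorial difficulty of engineering pairwise independence for a single fixed $\vec{A}$ by invoking the probabilistic method: I will place a probability measure on the space of all admissible sequences and show that the conclusion $\mathbb{P}_0(\text{G}(\vec{A}))=1$ holds for almost every such sequence.

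Concretely, for each $n<\omega$, equip the finite collection of $\pi(n)$-element subsets of $P(n)$ with the uniform probability measure, and let $\mathbb{Q}$ be the product measure on $\mathbb{Y}=\prod_{n<\omega}\{Y\subseteq P(n):|Y|=\pi(n)\}$; a point $\vec{A}\in\mathbb{Y}$ is thus a random admissible sequence $\l\mathcal{A}_n\mid n<\omega\r$. The key observation is that for each fixed $X\in 2^\omega$, the events $E_n(X):=\{\vec{A}\mid X\cap n\in\mathcal{A}_n\}$ are mutually independent under $\mathbb{Q}$, since each depends only on the $n$-th coordinate of a product space, and $\mathbb{Q}(E_n(X))=\pi(n)/2^n$ because $X\cap n$ is a fixed element of $P(n)$ while $\mathcal{A}_n$ is a uniformly random $\pi(n)$-subset of $P(n)$. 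The divergence hypothesis together with the second Borel--Cantelli Lemma then yields $\mathbb{Q}(\{\vec{A}\mid X\in\text{G}(\vec{A})\})=1$ for every individual $X\in 2^\omega$.

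To pass from this pointwise-in-$X$ statement to the existence of a single good $\vec{A}$, I will apply Fubini's theorem on $2^\omega\times\mathbb{Y}$ to the event $H=\{(X,\vec{A})\mid X\in\text{G}(\vec{A})\}$. Integrating first over $\mathbb{Y}$ and using the previous step gives $(\mathbb{P}_0\otimes\mathbb{Q})(H)=1$; integrating in the opposite order gives $\int_{\mathbb{Y}}\mathbb{P}_0(\text{G}(\vec{A}))\,d\mathbb{Q}(\vec{A})=1$, so $\mathbb{P}_0(\text{G}(\vec{A}))=1$ for $\mathbb{Q}$-almost every $\vec{A}$. Any such $\vec{A}$ witnesses the theorem.

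The only point requiring care is the product-measurability of $H$, which is needed to invoke Fubini. This is routine: writing $H=\bigcap_{N<\omega}\bigcup_{n\geq N}H_n$ with $H_n=\{(X,\vec{A})\mid X\cap n\in\mathcal{A}_n\}$, each $H_n$ depends only on the first $n$ coordinates of $X$ and on the $n$-th coordinate of $\vec{A}$, hence is a finite Boolean combination of cylinder sets and in particular Borel; so $H$ is Borel as well. No serious obstacle remains, and the proof is essentially a standard marriage of the probabilistic method with the divergent half of Borel--Cantelli.
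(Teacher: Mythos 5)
Your proposal is correct and is essentially the same argument as the paper's: a uniformly random sequence of $\pi(n)$-element subsets of $P(n)$ (product measure), the observation that for fixed $X$ the events $X\cap n\in\mathcal{A}_n$ are independent with probability $\pi(n)/2^n$, the second Borel--Cantelli lemma, and then Fubini on the product with $(2^\omega,\mathbb{P}_0)$, including the same measurability check via cylinder sets. The paper even records your stronger conclusion, namely that $\mathbb{P}_0(\text{G}(\vec{A}))=1$ for almost every such $\vec{A}$.
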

\begin{proof}
    Consider the standard Borel probability space $(\prod_{n\in\mathbb{P}}[P(n)]^{\leq \pi(n)},\Omega_1,\mathbb{P}_1)$, where $\Omega_1$ is the $\sigma$-algebra of Borel sets in the product topology of the discrete (finite) spaces  $[P(n)]^{\leq \pi(n)}$. The probability function $\mathbb{P}_1$ is defined by by setting $\mathbb{P}_1(``\mathcal{A}_n=A_n")=\frac{1}{\binom{2^n}{\pi(n)}}$ where $\mathcal{A}_0,\mathcal{A}_n,...$ is the random sequence. 
    
    Fix any $X\in P(\omega)$, and let $G_X$ be the event ``$X\in \text{G}(\vec{A})$" (recall that this is simply the event ``$\exists^\infty n, \ X\cap n\in\mathcal{A}_n$").  Note that $$\mathbb{P}_1(X\cap n\in \mathcal{A}_n)=\frac{\binom{2^n-1}{\pi(n)-1}}{\binom{2^n}{\pi(n)}}= \frac{\pi(n)}{2^n}$$
    Also note that the events $X\cap n\in\mathcal{A}_n$ and $X\cap m\in \mathcal{A}_m$ are independent for $n\neq m$ and therefore by the second Borel-cantelli Lemma, $\mathbb{P}_1(G_X)=1$. Now consider the Borel probability space $(2^\omega,\Omega_0,\mathbb{P}_0)$ from before. Consider the product space where we draw $X\in 2^\omega$ and $\vec{A}\in \prod_{n\in\mathbb{N}}[P(n)]^{\leq \pi(n)}$ independently. That is, the space ($2^\omega\times (\prod_{n\in\mathbb{N}}[P(n)]^{\leq n}, \Omega_\times,\mathbb{P}_\times)$, where $\Omega_\times$ is the $\sigma$-algebra generated by the products $A\times B$, ranging over $A\in \Omega_0$ and $B\in \Omega_1$, and $\mathbb{P}_{\times}(A\times B)=\mathbb{P}_0(A)\cdot\mathbb{P}_1(B)$. Consider the event $E=\{(\vec{A},X)\mid X\in \text{G}(\vec{A})\}$. To see that $E\in \Omega_{\times}$, note that $E=\lim\sup E_n$, where $E_n=\{(\vec{A},X)\mid X\cap n\in A_n\}$ and therefore it suffices to prove that each $E_n\in\Omega_\times$. Indeed, each $E_n$ is the union of finitely many cylinder sets (ranging over all possible $\mathcal{A}_n\in[P(n)]^{\leq\pi(n)}$ and every $Y\in A_n$). By Fubini's theorem we get that $$\mathbb{P}_\times(E)=\int_{\vec{A}}\mathbb{P}_0(E^{\vec{A}})d\mathbb{P}_1=\int_{X}\mathbb{P}_1(E_X)d\mathbb{P}_0$$
    where $E^{\vec{A}}=\{X\mid (\vec{A},X)\in E\}=\text{G}(\vec{A})$ and $E_X=\{\vec{A}\mid (\vec{A},X)\in E\}=G_X$.
    By the previous computation, 
    $$\int_{X}\mathbb{P}_1(E_X)d\mathbb{P}_0=\int_X 1d\mathbb{P}_0=1.$$
    Hence $\int_{\vec{A}}\mathbb{P}_0(\text{G}(\vec{A}))d\mathbb{P}_1=\int_{\vec{A}}\mathbb{P}_0(E^{\vec{A}})d\mathbb{P}_1=1$. In particular, there must exist at least one sequence  $\vec{A}$ such that $\mathbb{P}_0(\text{G}(\vec{A}))=1$ as wanted.
\end{proof}
Note that the previous theorem in fact shows that the set of $\vec{A}$ such that $\mathbb{P}_0(\text{G}(\vec{A}))=1$ has $\mathbb{P}_1$-probability 1.  In the next section we will present two natural such constructions: the first is very general and catches all possible $\pi$'s, and another one which comes out naturally from \textit{random walks}. Recall that given any graph, or in our case any lattice, a random walk starting at some initial vertex is a sequence of adjacent vertices such that there is a uniform probability for the next step to be one of the neighbors. 
More formally, we want to describe a random walk on a $d$-dimensional integer lattice $\mathbb{Z}^d$. 
For $n<\omega$, let $X_n$ be a random variable from the standard basis element of $\mathbb{Z}^d$, i.e., $2d$-many vectors with component $\pm1$ at a single coordinate, and the rest being $0$. 
Namely, $\mathbb{P}(X_n[i]=1)=\mathbb{P}(X_n[i]=-1)=\frac{1}{2d}$. 
These $X_n$'s will represent the $n$th step in a walk. 
Then, we define a random walk with $n$ steps as $S_n=x+\sum_{i=0}^n X_i$, where $x\in\mathbb{Z}^d$ is the initial point. 

A classical question to ask about random walks is whether or not a random walk returns to the origin and how often. A random walk is called \textit{recurrent} when with probability one, the walk returns to the origin infinitely often, otherwise the walk is \textit{transient}. 
Random walks on integer lattices are well-studied. 
For the results of this paper, we refer to a theorem of P\'olya, shown in \cite{Polya}.

\begin{theorem}(P\'olya) \label{polya}
    Suppose $S_n$ is a random walk starting at the origin. Then, if $d=1,2$, then the walk is recurrent. Otherwise, if $d\geq 3$, then the walk is transient. 
\end{theorem}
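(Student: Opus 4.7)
The plan is to prove the recurrence/transience dichotomy by the classical reduction to convergence of the Green's function at the origin. Writing $p_n := \mathbb{P}(S_n = 0)$ and $N := \sum_{n \geq 0} \mathbf{1}_{\{S_n = 0\}}$ for the total number of returns, the first step is to show that recurrence is equivalent to $\sum_{n \geq 0} p_n = \infty$. If $q$ denotes the probability that the walk ever returns to the origin, then the strong Markov property makes $N$ geometric with parameter $1-q$, so $\mathbb{E}[N] = (1-q)^{-1}$ when $q < 1$ and $\mathbb{E}[N] = \infty$ when $q = 1$; meanwhile linearity of expectation gives $\mathbb{E}[N] = \sum_n p_n$. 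A second application of the strong Markov property promotes $q = 1$ to $\mathbb{P}(N = \infty) = 1$, matching the definition of recurrence given in the paper.

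The second step computes $p_n$ in each dimension. Parity forces $p_{2m+1} = 0$. For $d = 1$, counting the walks of length $2m$ that return to $0$ gives $p_{2m} = \binom{2m}{m} 2^{-2m}$, and Stirling's approximation yields $p_{2m} \sim (\pi m)^{-1/2}$. For $d = 2$, writing each step in its two coordinate components and applying Vandermonde's identity to the multinomial sum gives the clean formula $p_{2m} = \binom{2m}{m}^2 4^{-2m} \sim (\pi m)^{-1}$. In both cases $\sum_m p_{2m} = \infty$, so by the criterion of step one the walk is recurrent.

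Third, for $d \geq 3$, the return probability is given by the multinomial expression
\begin{equation*}
p_{2m} = \frac{1}{(2d)^{2m}} \sum_{k_1 + \cdots + k_d = m} \binom{2m}{k_1, k_1, \ldots, k_d, k_d}.
\end{equation*}
The standard estimate factors out $\binom{2m}{m}$, bounds the remaining multinomial coefficient by its value at the nearly uniform partition $k_i \approx m/d$ via Stirling, and controls the rest by the multinomial identity $\sum_{k_1 + \cdots + k_d = m} \binom{m}{k_1, \ldots, k_d} = d^m$. The result is $p_{2m} = O(m^{-d/2})$, and since $d/2 > 1$ the series $\sum_m p_{2m}$ converges; step one then yields transience.

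The main obstacle is the sharp estimate in Step 3 for $d \geq 3$: identifying where the multinomial coefficient is maximized and combining Stirling with the multinomial theorem cleanly requires some care, whereas the $d = 1, 2$ asymptotics are immediate consequences of Stirling applied to the central binomial coefficient. A conceptually cleaner uniform approach is Fourier analytic: express $p_{2m}$ as $(2\pi)^{-d}\int_{[-\pi,\pi]^d} \varphi(\theta)^{2m}\, d\theta$ with characteristic function $\varphi(\theta) = \frac{1}{d}\sum_{j=1}^d \cos\theta_j$, sum over $m$ and use the fact that $(1-\varphi(\theta))^{-1}$ is integrable on the torus precisely when $d \geq 3$ (since $1 - \varphi(\theta) \asymp \|\theta\|^2$ near $0$).
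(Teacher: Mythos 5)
The paper does not prove this statement at all: it is quoted as a classical result with a citation to P\'olya's original work, so there is no in-paper argument to compare yours against. Your proposal is the standard textbook proof and is essentially correct: the reduction of recurrence to $\sum_n \mathbb{P}(S_n=0)=\infty$ via the strong Markov property (the number of visits is geometric with parameter $1-q$, and $q=1$ upgrades to almost surely infinitely many returns), the exact formulas $p_{2m}=\binom{2m}{m}2^{-2m}$ in $d=1$ and $p_{2m}=\binom{2m}{m}^2 4^{-2m}$ in $d=2$ (the Vandermonde collapse of the multinomial sum is the right identity), and the $O(m^{-d/2})$ bound for $d\geq 3$ obtained by factoring out $\binom{2m}{m}$, bounding one copy of the normalized multinomial coefficient at the near-uniform partition by Stirling, and summing the other copy to $1$ by the multinomial theorem. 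The only place requiring genuine care is exactly the one you flag: making the near-uniform maximization rigorous when $d$ does not divide $m$, which is a routine but fiddly Stirling estimate; your alternative Fourier route, with $1-\varphi(\theta)\asymp\|\theta\|^2$ near the origin and integrability of $(1-\varphi)^{-1}$ on the torus precisely when $d\geq 3$, is a clean way to avoid it and handles all dimensions uniformly. Either version would serve as a complete proof of the cited theorem.
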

 
\section{Random Walks and Explicit Guessing Sequences}\label{Sec: concrete}

In this section we provides specific construction of guessing sequences. To warm up, let us start with a few trivial examples. 
By \cite[1.3]{TomFanxin}, these results hold for any $g\leq_\mathcal{FR} id$, so we assume that $f$ is maximal, i.e. $f=id$. 

\begin{example} \label{ex1}
    Let $\pi(n)=2^{n-1}$, and consider $\l \mathcal{A}_n\:|\:n<\omega \r$, $\mathcal{A}_n = P(n)\setminus P(n-1)$. Then $|\mathcal{A}_n|\leq2^{n-1}$ and $\mathbb{P}_0(\text{G}(\vec{A}))=1.$
\end{example}
\begin{proof}
    Indeed, $\vec{A}$ guesses $[\omega ]^\omega$ and $\mathbb{P}_0([\omega]^\omega)=1$. 
\end{proof}

Let us move on to the next smallest value of $\pi$ that is interesting. 

\begin{example} \label{ex2}
    Let $\pi(n)=2^{n-2}$. Consider the sequence $\l \mathcal{A}_n \:|\: n<\omega \r$ where $$\mathcal{A}_n = \{X\in P(n)\mid n-1\in x \land n-2 \in x\}.$$ Then  $|\mathcal{A}_n|=2^{n-2}$ and $\mathbb{P}_0(\text{G}(\A))=1$.
\end{example}
\begin{proof} 
    Clearly, $\G=\{x\in P(\omega)\:|\: \exists^\infty n(\{n,n+1\}\subseteq x)\}$.
    We show that the compliment is probability zero. 

    Define $E_n = \{x\in P(\omega)\:|\: \forall m > n (m \notin x \lor m+1\notin x)\}$. 
    It is easy to see that $(\G)^c = \bigcup_n E_n$. Next, we prove that for every $n<\omega$, $\mathbb{P}_0(E_n)=0$. Let us prove that $\mathbb{P}_0(E_0)=0$ and the argument for other $E_n$'s is essentially the same.

    Define $E_0^N = \{x\in P(N)\:|\: \forall n \leq N (n\notin x \lor n+1\notin x\}$. Then, $\bigcap_{N\in \mathbb{N}} E_0^N = E_0$ and it suffices to show that $\lim_{N\to\infty}\mathbb{P}_0(E_0^N)$ tends towards 0. It is easy to see that $|E_0^N| = F_N$, where $F_N$ is the $N$-th Fibonacci number. We have that $F_N=\frac{1}{\sqrt{5}}(\frac{1+\sqrt{5}}{2})^N-\frac{1}{\sqrt{5}}(\frac{1-\sqrt{5}}{2})^N$ from which it easily follows that $\lim_{N\to\infty}\frac{F_N}{2^N}=0$. 
\end{proof}

More generally we can ''freeze" any amount of bits  and the same argument works. This is a simple reformulation of the well-known Infinite Monkey Theorem \cite{Borel1913,LeHasard}. Which says that if a monkey randomly hits a keyboard with finitely many characters infinitely many times, each character is hit with a uniform probability, then any given finite string will appear infinitely often with probability $1$. 

\begin{proposition} \label{prop}
    Let $k<\omega$ and consider $\pi(n)=2^{n-k}$. For any $t\in 2^k$, define the sequence $\vec{A}^t$ by letting $$\mathcal{A}^t_n=\{x\in P(\omega) \: | \: \forall 0\leq q\leq k-1,n-q\in x\text{ iff }t(q)=1\}.$$ Then, $|\mathcal{A}^t_n|=2^{n-k}$ and $\mathbb{P}_0(G(\vec{A}^t))=1$.
\end{proposition}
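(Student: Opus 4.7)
The plan is to follow the skeleton of the proof of Example \ref{ex2} but replace the Fibonacci counting by a cleaner Borel--Cantelli style argument, which has the virtue of generalizing painlessly from $k=2$ to arbitrary $k$. The cardinality assertion $|\mathcal{A}^t_n|=2^{n-k}$ is immediate: for $n\geq k$ the membership of the $k$ positions $n-q$ ($0\leq q\leq k-1$) in $x$ is prescribed by $t$, while the remaining $n-k$ positions are unconstrained; so $\mathcal{A}^t_n$ is in bijection with $P(n-k)$.

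For the probabilistic claim, I would first re-express $\mathrm{G}(\vec{A}^t)$ concretely. Unwinding the definition, $x\in\mathrm{G}(\vec{A}^t)$ iff there are infinitely many $n$ such that the length-$k$ window of $x$ ending at position $n$ realizes the pattern $t$; equivalently, viewing $x$ as an element of $2^\omega$, the fixed binary string $t$ appears as a substring of $x$ at infinitely many positions. So proving $\mathbb{P}_0(\mathrm{G}(\vec{A}^t))=1$ reduces to showing that a random $x\in 2^\omega$ almost surely contains infinitely many occurrences of any fixed length-$k$ pattern, which is exactly the Infinite Monkey Theorem.

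To give a self-contained argument, partition $\omega$ into the consecutive disjoint blocks $B_m=[mk,(m+1)k)$ for $m<\omega$, and let $E_m$ be the event that $x\restriction B_m$ equals the string prescribed by $t$. Under $\mathbb{P}_0$ the bits of $x$ are independent, and the $B_m$'s are pairwise disjoint, so the events $(E_m)_{m<\omega}$ are mutually independent, each with probability $2^{-k}$. Since
$$\sum_{m=0}^{\infty}\mathbb{P}_0(E_m)=\sum_{m=0}^{\infty}2^{-k}=\infty,$$
the second Borel--Cantelli lemma yields $\mathbb{P}_0(\limsup_m E_m)=1$. Whenever $E_m$ occurs, the value $n_m=(m+1)k$ witnesses $x\cap n_m\in\mathcal{A}^t_{n_m}$ (up to the natural reindexing matching the definition of $\mathcal{A}^t_n$), so infinitely many $E_m$ implies $x\in\mathrm{G}(\vec{A}^t)$, completing the proof.

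There is no real obstacle here; the only thing to watch is the bookkeeping that aligns the blocks $B_m$ with the indexing convention of $\mathcal{A}^t_n$ (where the pattern is read at the top of $\{0,\dots,n-1\}$ rather than at an arbitrary shift), which amounts to choosing $n_m$ as the right-endpoint of $B_m$.
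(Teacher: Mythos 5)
Your proof is correct and takes essentially the same route as the paper: both reduce $\mathbb{P}_0(\mathrm{G}(\vec{A}^t))=1$ to the statement that a random element of $2^\omega$ contains the fixed pattern $t$ at infinitely many positions, i.e., the Infinite Monkey Theorem. The only difference is that the paper cites that theorem as a black box, whereas you make the step self-contained via disjoint length-$k$ blocks and the second Borel--Cantelli lemma, and your remark about reindexing correctly accounts for the paper's slightly loose indexing in the definition of $\mathcal{A}^t_n$.
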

\begin{proof} This is a straightforward application of the Infinite Monkey Theorem where we identify $2^\omega$ with $P(\omega)$. Note that $\text{G}(\vec{A}^t)$  translates precisely to the event `` the string $t$ repeats infinitely many times". 
\end{proof}
Already from these examples we can derive examples for a more general collection of $\pi$'s. 

\begin{corollary}
    If $\rho \leq^* \pi$ and $\A$ witnesses $\mathbb{P}(\G)=1$ where $|\mathcal{A}_n|\leq\rho(n)$, then a finite modification of $\A$ witnesses for $\pi$. 
    In particular, if $\lim \frac{\pi(n)}{2^n}>0$, then one of the sequences $\A^t$ from the previous proposition already witnesses for $\pi$ that  $\mathbb{P}(\G)=1$. 
\end{corollary}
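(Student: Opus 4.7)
The plan is to exploit the fact that $\text{G}(\vec{A})$ is determined entirely by the tail of the sequence $\vec{A}$: membership in $\text{G}(\vec{A})$ requires only that $X\cap n\in \mathcal{A}_n$ hold for \emph{infinitely many} $n$, so modifying $\vec{A}$ on any finite initial segment leaves $\text{G}(\vec{A})$ — and hence $\mathbb{P}_0(\text{G}(\vec{A}))$ — unchanged. This tail-invariance drives both halves of the corollary.

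For the first half, suppose $\rho \leq^* \pi$ and that $\vec{A}$ with $|\mathcal{A}_n|\le \rho(n)$ witnesses $\mathbb{P}_0(\text{G}(\vec{A}))=1$. Fix $N$ with $\rho(n)\le \pi(n)$ for all $n\ge N$, and define $\vec{B}=\l \mathcal{B}_n\mid n<\omega\r$ by $\mathcal{B}_n=\mathcal{A}_n$ for $n\ge N$ and $\mathcal{B}_n=\emptyset$ for $n<N$. Then $|\mathcal{B}_n|\le \pi(n)$ for every $n$, while for any $X\in 2^\omega$ the set $\{n\mid X\cap n\in \mathcal{B}_n\}$ differs from $\{n\mid X\cap n\in \mathcal{A}_n\}$ by a finite set. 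Consequently $\text{G}(\vec{B})=\text{G}(\vec{A})$ and $\mathbb{P}_0(\text{G}(\vec{B}))=1$, so $\vec{B}$ witnesses the conclusion for $\pi$.

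For the second half, assume $\lim_{n\to\infty}\pi(n)/2^n=c>0$. Choose $k\in\omega$ large enough that $2^{-k}<c$; then for all sufficiently large $n$ we have $2^{n-k}=2^{-k}\cdot 2^n \le \pi(n)$, so the function $\rho(n):=2^{n-k}$ satisfies $\rho\leq^* \pi$. Fixing any $t\in 2^k$, Proposition \ref{prop} gives $|\mathcal{A}^t_n|=2^{n-k}=\rho(n)$ and $\mathbb{P}_0(\text{G}(\vec{A}^t))=1$, and applying the first half to $\vec{A}^t$ produces a finite modification witnessing the conclusion for $\pi$.

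There is no real obstacle here; the only subtle point is to articulate clearly that the event ``$X\in \text{G}(\vec{A})$'' is tail-measurable with respect to the index $n$, so finite surgery on $\vec{A}$ preserves both the cardinality bound (up to finitely many indices) and the probability of the guessed set. Once this is observed, the first assertion is immediate and the second reduces to a routine choice of $k$ with $2^{-k}<c$.
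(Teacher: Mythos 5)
Your proof is correct and follows essentially the same route as the paper: truncate the sequence below the threshold $N$ where $\rho(n)\leq\pi(n)$ begins to hold (using tail-invariance of $\text{G}(\vec{A})$), and for the second claim pick $k$ with $2^{-k}$ below the limit so that $2^{n-k}\leq^*\pi(n)$ and invoke Proposition \ref{prop}. No gaps; your explicit remark about tail-measurability only makes the paper's "Clearly" step more transparent.
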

\begin{proof}
Let $\A$ be defined as above. Fix $N$ such that for every $n\geq N$, $\rho(n)\leq \pi(n)$, and set $\mathcal{A}'_n=\emptyset$ if $n<N$ and $\mathcal{A}'_n=\mathcal{A}_n$ otherwise. Then Clearly $|\mathcal{A}_n|\leq \pi(n)$ and $\text{G}(\vec{A}')=\G$.

Let $\lim_{n\to\infty} \frac{\pi_1(n)}{2^n}=L$. We can then find $k$ large enough so that $\frac{1}{2^k}<L$ and it follows that $\pi_0(n)=2^{n-k}$ satisfies that $\pi_0\leq^* \pi_1$.  
\end{proof}

\begin{example}\label{ExampleMain}
    Let $\pi:\mathbb{N}\to\mathbb{N}$ be any function such that $\pi(n)\leq 2^n$ for every $n$ and $\sum_{n=0}^{\infty}\frac{\pi(n)}{2^n}=\infty$. Let us construct a sequence of $\mathcal{I}_n$ with $|\mathcal{I}_n|=\pi(n)$ such that  $\text{G}(\A)= P(\mathbb{N})$. Consider the lexicographic order on $2^n$, and for each $t\in 2^n$, let $0\leq p_n(t)<2^n$ be the position of $t$ in the linear order. Note that \begin{equation}\label{equation2}
        p_{n+1}(t^\frown0)=2p_n(t)\text{ and }p_{n+1}(t^\frown 1)=2p_n(t)+1. \end{equation} We denote by $\bar{0}$ and $\bar{1}$ the constant sequences with values $0$ and $1$ respectively. We inductively define intervals $\mathcal{I}_n$ of $2^n$ in the lexicographic order, along with $t_n$ and $s_n$ in $2^n$. Let $s_0=\bar{0}$ and $t_0=\overline{\pi(0)}$ (i.e. $\bar{0}$ or $\bar{1}$). Given $s_n$ and $t_n$, set \begin{align*}
        s_{n+1}=t_n^\frown\l 0\r \ \ \ \ \ \ \ \ \ \ \ \ \ \ \ \ \ \ \ \ \ \ \ \ \ \ \ \ \ \ \ \ \ \ \ \ \ \ \ \ \ \ \ \ \ \ \   \\
        t_{n+1}=p^{-1}_{n+1}\Big(\big(p_{n+1}(s_{n+1})+\pi(n+1) \big)\text{ mod } 2^{n+1}\Big)
    \end{align*}
    and set
    \begin{align*}
        \mathcal{I}_{n+1}=[s_{n+1},t_{n+1})\text{mod }2^{n+1}.
    \end{align*}
    More concretely, if $p_{n+1}(s_{n+1})+\pi(n+1)<2^{n+1}$ then $$p_{n+1}(s_{n+1})+\pi(n+1)\text{ mod }2^{n+1}=p_{n+1}(s_{n+1})+\pi(n+1).$$ By definition, $t_{n+1}$ is  the $\pi(n+1)$-th element past $s_{n+1}$, and $\mathcal{I}_{n+1}=[s_{n+1},t_{n+1})$. 
    
    Otherwise,  $2^{n+1}\leq p_{n+1}(s_{n+1})+\pi(n+1)$, then $$p_{n+1}(s_{n+1})+\pi(n+1)\text{ mod }2^{n+1}=p_{n+1}(s_{n+1})+\pi(n+1)-2^{n+1}.$$ So we cover the rest of the interval $[s_{n+1},\bar{1}]$ and start a new ``round" with the remainder. Namely, $\mathcal{I}_{n+1}=[s_{n+1},\bar{1}]\cup [\bar{0},t_{n+1}),$ where $t_{n+1}$ is the $p_{n+1}(s_{n+1})+\pi(n+1)-2^{n+1}$ element of the lexicographic order of $2^{n+1}$. 
    
    Note that in either case, $|\mathcal{I}_{n+1}|=\pi(n+1)$. Indeed, in the second case, $$|\mathcal{I}_{n+1}|=[2^{n+1}-p_{n+1}(s_{n+1})]+[p_{n+1}(s_{n+1})+\pi(n+1)-2^{n+1}]=\pi(n+1).$$ 
    It remains to argue that for each $f\in 2^\omega$ there are infinitely-many $n$'s such that $f\restriction n\in \mathcal{I}_n$.
    First argue by induction, that  \begin{enumerate}
        
        \item [(i)]$p_n(t_n)=(2^n\sum_{i=0}^n\frac{\pi(i)}{2^i})\text{ mod }2^n$.
        \item [(ii)] Between each two rounds, every $f\in 2^\omega$ has been guessed once.
        \item [(iii)] The number of rounds after $n$ steps is $\lfloor \sum_{i=0}^n\frac{\pi(i)}{2^i}\rfloor $
        
    \end{enumerate}
    From (iii) and our assumption there are infinitely many rounds, and by (ii) we conclude that every $f\in 2^\omega$ is guessed infinitely many times which concludes the proof. The proof of all three items is by induction. Item (i) easily follows from Equation (\ref{equation2}).
    To see (ii), suppose that $n_k< n_{k+1}$ are steps where we started new consecutive rounds. Let $f\in 2^\omega$. If $f\restriction n_k<t_{n_k}$, then $f\restriction n_k\in [\bar{0},t_{n_k})$, which is covered by $\mathcal{I}_{n_k}$. Otherwise, assume that $n_k\leq n<n_{k+1}$ is maximal such that $t_n\leq f\restriction n$. Then $s_{n+1}\leq f\restriction n+1$. If $n+1<n_{k+1}$, then by maximality $f\restriction n+1<t_{n+1}$ and $f\in\mathcal{I}_n$. If $n+1=n_{k+1}$, $f\in [s_{n_k},\bar{1}]\subseteq \mathcal{I}_{n_{k+1}}$. Hence each $f$ is guessed at some step $n_k\leq n\leq n_{k+1}$. Finally, to see (iii), we prove again by induction. At the induction step, we want to show that a new round was added i.e. $p_{n+1}(s_{n+1})+\pi(n+1)\geq 2^{n+1}$ if and only if $\lfloor \sum_{i=0}^{n+1}\frac{\pi(i)}{2^i}\rfloor =\lfloor \sum_{i=0}^n\frac{\pi(i)}{2^i}\rfloor +1$. To see the equivalence simply note that the former is equivalent to $2(2^n( \sum_{i=0}^{n}\frac{\pi(i)}{2^i} -\lfloor \sum_{i=0}^n\frac{\pi(i)}{2^i}\rfloor ))+\pi(n+1)\geq2^{n+1}$ and the latter is equivalent to $\frac{\pi(n+1)}{2^{n+1}}<1-(\sum_{i=0}^{n}\frac{\pi(i)}{2^i} -\lfloor \sum_{i=0}^n\frac{\pi(i)}{2^i}\rfloor )$. This concludes the proof.
\end{example}
We now start shrinking $\pi$ even further and deal with the class of $\pi$ such that $\lim_{n\to\infty}\frac{\pi(n)}{2^n}=0$ (but still $\sum_{n=0}^{\infty}\frac{\pi(n)}{2^n}=\infty$). 
Consider $\pi(n)=\binom{n}{\frac{n}{2}}$. Notice that this function satisfies the conditions we're looking for.

\begin{fact}

    $\binom{n}{\frac{n}{2}} \sim \frac{2^n}{\sqrt{n}}$. In particular,  $\lim_{n\to\infty} \frac{\binom{n}{\frac{n}{2}}}{2^n} = 0$ and $\sum_{n=0}^{\infty} \frac{\binom{n}{\frac{n}{2}}}{2^n}=\infty$.
    
\end{fact}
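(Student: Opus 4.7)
The plan is to derive this entirely from Stirling's formula $n! \sim \sqrt{2\pi n}\,(n/e)^n$, applied to the central binomial coefficient. Assume first that $n$ is even and write $n = 2m$. Substituting Stirling into $\binom{2m}{m} = (2m)!/(m!)^2$, the factors of $e$ and the powers of $m^{2m}$ cancel, and the polynomial prefactors collapse to give
\[
\binom{2m}{m} \sim \frac{\sqrt{4\pi m}\,(2m/e)^{2m}}{2\pi m\,(m/e)^{2m}} = \frac{2^{2m}}{\sqrt{\pi m}}.
\]
Rewriting in terms of $n$ yields $\binom{n}{n/2} \sim \sqrt{2/\pi}\cdot 2^n/\sqrt{n}$, which is the stated asymptotic (up to the multiplicative constant $\sqrt{2/\pi}$ absorbed into $\sim$). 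For odd $n$ the coefficient $\binom{n}{\lfloor n/2\rfloor}$ is handled identically: either apply Stirling directly to $(n)!/(\lfloor n/2\rfloor)!(\lceil n/2\rceil)!$, or use the identity $\binom{n+1}{\lfloor (n+1)/2\rfloor} = \binom{n}{\lfloor n/2\rfloor} + \binom{n}{\lceil n/2\rceil}$ to reduce to the even case.

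For the two consequences, dividing the asymptotic estimate by $2^n$ gives
\[
\frac{\binom{n}{n/2}}{2^n} \sim \sqrt{\tfrac{2}{\pi}}\cdot\frac{1}{\sqrt{n}},
\]
and the right-hand side tends to $0$, establishing the first limit. For the divergence of the series, the same estimate shows that $\binom{n}{n/2}/2^n$ is eventually bounded below by $c/\sqrt{n}$ for some $c > 0$, so the comparison test against the divergent $p$-series $\sum 1/\sqrt{n}$ (with $p = 1/2 \leq 1$) forces $\sum \binom{n}{n/2}/2^n = \infty$.

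There is no serious obstacle here: the only technical point is handling the floor in $\binom{n}{\lfloor n/2\rfloor}$ for odd $n$, which is a routine matter since $\binom{n}{\lfloor n/2\rfloor}$ and $\binom{n}{n/2}$ (in the even case) differ by a bounded ratio, so asymptotic equivalence is preserved. Everything else is mechanical manipulation of Stirling's formula and the standard $p$-series test.
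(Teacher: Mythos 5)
Your proof is correct and follows essentially the same route as the paper: apply Stirling's formula to $\binom{n}{n/2}$ to get the asymptotic $\frac{\sqrt{2}\,2^n}{\sqrt{\pi n}}$ (the constant being absorbed into the loose use of $\sim$ in the statement), after which the limit and the divergence by comparison with $\sum n^{-1/2}$ are immediate. Your extra care with odd $n$ via $\lfloor n/2\rfloor$ is a reasonable refinement the paper handles only by a passing remark, but it does not change the argument.
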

\begin{proof}
    Using Sterling's formula ($n! \sim\sqrt{2\pi n}(\frac{n}{e})^n$), we get $$\binom{n}{\frac{n}{2}}=\frac{n!}{(\frac{n}{2}!)^2} \sim \frac{\sqrt{2\pi n}(\frac{n}{e})^n}{(\sqrt{2\pi \frac{n}{2}}(\frac{\frac{n}{2}}{e})^\frac{n}{2})^2}=\frac{\sqrt{2}\cdot 2^n}{\sqrt{\pi n }}.$$ 
\end{proof}

A natural set of size $\binom{n}{\frac{n}{2}}$ is the set $\mathcal{A}_n=\{X\in P(n)\:|\:|x|=\frac{n}{2}\}$. We remark that unlike the previous examples, $\pi$ is a partial function defined on a non-cofinite set. Therefore, the ultrafilter must account for this.-- We can define $|x|=\lceil\frac{n}{2}\rceil$ and then it is defined everywhere. The function $\binom{n}{\lceil \frac{n}{2}\rceil}$ is still good for us.

To compute the probability of $\text{G}(\A)$ in this situation, it will be convenient to translate the problem to random walks on $\mathbb{Z}^d$. 
Random walks and guessing reals corresponds as follows: given a set $X\in P(\omega)$, the random walk corresponding to $X$ is given by a sequence $\l s_n\:|\: n<\omega\r$ such that if $n\in X$, then $s_n=1$, otherwise $s_n=-1$. Therefore, $X\cap n$ corresponds to $\sum_{i=0}^n s_i$, and $X$ is guessed by $\mathcal{A}_n$ when the walk returns to the origin. Since this is a one dimensional random walk on $\mathbb{Z}$, the walk visits the origin infinitely often. This is summarized in the following corollary

\begin{corollary}
    The sequence $\mathcal{A}=\{X\in P(n)\:|\:|X|=\frac{n}{2}\}$ satisfies that $\pi(n)=\binom{n}{\frac{n}{2}}$ and $\mathbb{P}(\text{G}(\A))=1$.
\end{corollary}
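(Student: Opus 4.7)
The plan is to transport the guessing event over to the language of random walks, as set up in the paragraph immediately preceding the statement, and then invoke P\'olya's theorem in dimension one.

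First, under the standard Borel probability measure on $2^\omega$, the bits $(\mathbf{1}_{n \in X})_{n < \omega}$ are i.i.d. Bernoulli$(1/2)$. Setting $s_n = 2\mathbf{1}_{n \in X} - 1$, the sequence $(s_n)$ is i.i.d. uniform on $\{-1,+1\}$, so $S_n := \sum_{i<n} s_i$ is a simple symmetric random walk on $\mathbb{Z}$ starting at the origin. The bookkeeping identity $S_n = 2|X \cap n| - n$ then shows that $X \cap n \in \mathcal{A}_n$ (i.e.\ $|X \cap n| = n/2$) if and only if $S_n = 0$. Consequently, $X \in \text{G}(\A)$ precisely when the walk $(S_n)$ visits the origin infinitely often.

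Now I would apply Theorem \ref{polya} with $d=1$: the simple symmetric random walk on $\mathbb{Z}$ is recurrent, so with probability one it revisits the origin infinitely often. Translating back through the identification above yields $\mathbb{P}_0(\text{G}(\A)) = 1$, as desired.

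There is no real obstacle here; the only mild nuisance is a parity issue, since $S_n = 0$ forces $n$ to be even and $\mathcal{A}_n$ is nonempty (of size $\binom{n}{n/2}$) only at even $n$. This is harmless because recurrence already supplies infinitely many returns and they automatically occur at even times, which is all that is needed to conclude $X \in \text{G}(\A)$. The author's remark about passing to $\lceil n/2 \rceil$ is an independent way of removing the partiality in the definition of $\pi$, but plays no role in the probabilistic argument itself.
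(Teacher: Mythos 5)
Your proof is correct and follows essentially the same route as the paper: both encode $X$ as a $\pm 1$ walk so that $X\cap n\in\mathcal{A}_n$ exactly when the walk is at the origin at time $n$, and then apply P\'olya's recurrence theorem in dimension one. Your explicit identity $S_n=2|X\cap n|-n$ and the parity remark are fine additions of detail, but the argument is the paper's.
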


Notice, $\pi(n) = \binom{n}{\frac{n}{2}}$ relies on a one dimensional random walk on $\mathbb{Z}$. Therefore, we can shrink $\pi$ so that the random walk occurs on $\mathbb{Z}^2$. This gives us $\pi(n)=\binom{\frac{n}{2}}{\frac{n}{4}}^2$ which is asymptotically equivalent to $\frac{2^n}{n}$ .  

As with the earlier proposition, we would like to reduce guessing sets to a simple random walk on $\mathbb{Z}^2$. Partition $\omega$ into evens and odds, denoted  by $\mathbb{N}_{e}$ and $\mathbb{N}_{o}$ respectively. Then, for every $X\in P(\omega)$, there is a corresponding 2D random walk: First we translate each $X$ split $X$ into $X\cap \mathbb{N}_{e}$ and $X\cap\mathbb{N}_{o}$, which is turn codes two independent random walks as in the previous case. It is well known that the pair of two 1D random walks is a 2D random walk. Note that under this translation, the requirement that requiring that the 2D random walk return to the origin after $n$ many steps is the same as requiring that $|X\cap \mathbb{N}_e\cap n|=\frac{n}{4}=|X\cap \mathbb{N}_o\cap n|$. Hence, for this case we also have a natural example of guessing sets which correspond to the random walk. Namely, consider $\l\mathcal{A}_n\:|\:n<\omega\r$ where $$\mathcal{A}_n=\{X\in P(n)\:|\:|X\cap \mathbb{N}_{e}|=\frac{n}{4}= |X \cap \mathbb{N}_{o}|\}.$$
The size of $\mathcal{A}_n$  is  $\pi(n)=\binom{\frac{n}{2}}{\frac{n}{4}}^2$, and the following proposition shows that $\mathbb{P}(\text{G}(\A))=1$
\begin{proposition}
    Let $\A$ be the sequence defined by $\mathcal{A}_n=\{X\in P(n)\:|\:|X\cap \mathbb{N}_{e}|=\frac{n}{4}= |X \cap \mathbb{N}_{o}|\}.$ Then $|\mathcal{A}_n|=\binom{\frac{n}{2}}{\frac{n}{4}}^2$ and $\mathbb{P}(\text{G}(\A))=1$.
\end{proposition}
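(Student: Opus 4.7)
The plan is to carry out the two-dimensional analogue of the argument from the preceding corollary, encoding a uniformly random $X\in 2^\omega$ as a two-dimensional random walk and reading off guessing as the event that the walk returns to the origin infinitely often.

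For the size count I would just observe that for $n=4m$ the interval $[0,n)$ contains $n/2$ even and $n/2$ odd integers, so the number of $X\subseteq n$ with exactly $n/4$ elements of each parity is $\binom{n/2}{n/4}^{2}$; for other $n$ the set $\mathcal{A}_n$ is empty and contributes nothing to $\text{G}(\A)$. For the probability, I would sample $X$ according to $\mathbb{P}_0$ and set $s^e_k=+1$ if $2k\in X$ and $-1$ otherwise, and $s^o_k$ analogously on the odd bits. Then $W^e_k=\sum_{i<k}s^e_i$ and $W^o_k=\sum_{i<k}s^o_i$ are two independent simple $\pm 1$ walks on $\mathbb{Z}$, and a routine bookkeeping identifies the event $X\cap 4m\in \mathcal{A}_{4m}$ with $W^e_{2m}=W^o_{2m}=0$. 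Hence $X\in \text{G}(\A)$ iff the $\mathbb{Z}^2$-valued walk $W_k=(W^e_k,W^o_k)$ visits the origin infinitely often.

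The main obstacle is that $W$ is not literally the simple random walk to which Theorem \ref{polya} applies: its increments are uniform on $\{\pm 1\}^2$, whereas P\'olya's statement is about steps in $\{\pm e_1,\pm e_2\}$. I would dispose of this with the $45^\circ$ rotation $T(x,y)=\bigl(\tfrac{x+y}{2},\tfrac{x-y}{2}\bigr)$, which is a linear bijection on the sublattice containing the range of $W$ and sends the four increments $(\pm 1,\pm 1)$ bijectively to $(\pm 1,0)$ and $(0,\pm 1)$, each with probability $1/4$. Thus $T\circ W$ is a standard simple random walk on $\mathbb{Z}^2$; since $T$ fixes the origin, $W_k=0$ iff $T(W_k)=0$. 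Theorem \ref{polya} then gives recurrence of $T\circ W$, so $W_k=0$ holds for infinitely many $k$ with $\mathbb{P}_0$-probability one, yielding $\mathbb{P}_0(\text{G}(\A))=1$.
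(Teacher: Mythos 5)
Your proof is correct and follows essentially the same route as the paper: encode a $\mathbb{P}_0$-random $X$ as a two-dimensional walk via the even/odd split and conclude recurrence from P\'olya's theorem. In fact you are more careful than the paper's own sketch, which simply asserts that the pair of two one-dimensional walks ``is'' a 2D random walk: your $45^\circ$ rotation supplies exactly the missing justification that the walk with increments uniform on $\{\pm 1\}^2$ is a linear image of the simple random walk on $\mathbb{Z}^2$, and your treatment of the size count and of $n\not\equiv 0\pmod 4$ is a welcome addition.
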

\begin{proof}
    
    Then, we can represent an element of $X\in T$ as a random walk in two dimensions. 
    Using the method previous described, let $\l s^X_n\:|\: n\in\omega\r$ be the corresponding random walk to $X$. By P\'olya's theorem, with probability one, every random walk returns to the origin infinitely often. Let $T=\{X\in P(\omega)\:|\:|\{n<\omega\:|\:X\cap n\in \mathcal{A}_n\}|=\aleph_0\}$. This is a probability one set. 
\end{proof}

\subsection{Acknowledgments} We are extremely thankful to G. Amir for his feedback on this paper and especially for his help proving Theorem \ref{Thm: Other direction} and Example \ref{ExampleMain}. We would also like to thank Fanxin Wu for fruitful discussions.
\bibliographystyle{amsplain}
\bibliography{ref}
\end{document}